\newtheorem{thm}{Theorem}
\newtheorem{lemma}[thm]{Lemma}
\newtheorem{corollary}[thm]{Corollary}
\newtheorem{proposition}[thm]{Proposition}
\numberwithin{thm}{section} \theoremstyle{definition}
\newtheorem{defn}[thm]{Definition}
\newcommand\Hecke{\mathscr{H}}
\newcommand\Alg{\mathcal{A}}
\newcommand\Z{\mathbb{Z}}
\DeclareMathOperator{\Ind}{Ind} 
\begin{document}
\title{Kazhdan-Lusztig basis for generic Specht modules}
\author{Yunchuan Yin}
\address{Department of Applied Mathematics, Shanghai University of Finance and Ecomonics\\
Shanghai 200433, P. R. China} \email{yunchuan228@hotmail.com}\email{yyin@mail.shufe.edu.cn}
 \subjclass[2000]{Primary and secondary
20C08} \keywords{Hecke algebra, Coxeter group, Kazhdan-Lusztig
bases, Specht module, Murphy basis, $W\!$-graph}
\begin{abstract}
In this paper, we let $\Hecke$ be the Hecke algebra associated with a finite
Coxeter group $W$ and with one-parameter, over the ring of scalars $\Alg=\mathbb{Z}(q, q^{-1})$.  With an elementary method, we introduce a cellular basis of $\Hecke$ indexed by the sets $E_J (J\subseteq S)$ and obtain a general theory of "Specht modules". Our main purpose is to provide an algorithm for  $W\!$-graphs for the "generic Specht module", which associates with the Kazhdan and Lusztig  cell ( or more generally, a union of cells of $W$ ) containing the longest element of a parabolic subgroup $W_J$ for appropriate $J\subseteq S$. As an example of applications, we show a construction of $W\!$-graphs for the Hecke algebra of type $A$.
\end{abstract}
\maketitle
\section*{Preliminaries}
Let $W$ be a finite Coxeter group with $S$ the set of simple
reflections, and let $\Hecke$ be the corresponding Hecke algebra. We
use a variation of the definition given in~\cite{KL}, taking
$\Hecke$ to be an algebra over $\Alg=\Z[q^{-1},q]$, the ring of
Laurent polynomials with integer coefficients in the indeterminate
$q$. Then $\Hecke$ is a algebra generated by $(T_s)_{s\in S}$ subject to
\begin{align*}
{T_s}^2 &= 1+(q-q^{-1})T_{s}\\
\underbrace{T_rT_sT_r\cdots}_{\text{$m_{rs}$ factors}}
&= \underbrace{T_sT_rT_s\cdots}_{\text{$m_{rs}$ factors}}
\end{align*}
(for all $r,s \in S$).

Moreover, $\Hecke$ has $\Alg$-basis $\{\,T_w\mid w\in W\,\}$ where
$T_w = T_{s_1}T_{s_2} \cdots T_{s_l}$ whenever $s_1s_2 \cdots s_l$ is a reduced expression for $w$,  and
\begin{equation}\label{eq:1}
T_sT_w=\begin{cases}
T_{sw}&\text{if $\ell(sw)>\ell(w)$}\\[3 pt]
T_{sw}+(q-q^{-1})T_w&\text{if $\ell(sw)<\ell(w)$,}
\end{cases}
\end{equation}
for all $w\in W$ and $s\in S$. We also define $\Alg^+=\Z[q]$, the
ring of polynomials in $q$ with integer coefficients, and let
$a\mapsto\overline{a}$ be the involutory automorphism of $\Alg$
such that $\overline{q}=q^{-1}$. This involution on $\Alg$ extends
to an involution on $\Hecke$ satisfying
$\overline{T_s}=T_s^{-1}=T_s+(q^{-1}-q)$ for all $s\in S$. This
gives $\overline{T_w}=T_{w^{-1}}^{-1}$ for all $w\in W$.
The map $\Hecke\rightarrow \Hecke, h\longmapsto \overline{h}$ is a ring involution such that
\[
\overline{\sum_{w\in W}a_wT_w}=\sum_{w\in W}\bar{a_w}{T_{w^{-1}}}^{-1}, a_w\in \Alg.
\]
\subsection{Kazhdan-Lusztig basis}
There are two types of Kazhdan-Lusztig bases of $\Hecke$, denoted by $\{C_w |w\in W\}$ and $\{C'_w |w\in W\}$ in the original article by Kazhdan-Lusztig~\cite{KL}.
It will be technically more convenient to work with the $C$-basis. The reason can be seen, for example, in Lusztig~\cite[chap.18]{Lusztig}.
The basis element
$C_w$ is uniquely determined by the conditions that
 $\overline{C_w}=C_w$ and $C_w\equiv T_w$ mod $\Hecke_{>0}$,
where $\Hecke_{>0}:=\sum_{w\in W}q\Alg^+T_w$, see~\cite{Lusztig}. Or more clearly
\[
C_w=T_w+\sum_{y\in W, y<w}p_{y, w}T_y,
\]
where $\leq$ denotes the \textit{Bruhat-Chevalley order} on $W$ and $p_{y, w}\in q\Alg^+$ for all $y<w$ in $W$. We write $y<w$ if $y\leq w$ and $y\neq w$.

The polynomials $p_{y,w}$ are related to the polynomials $P_{y,w}$ of
\cite{KL} (the \emph{Kazhdan-Lusztig polynomials}) by
$p_{y,w}(q)=(-q)^{\ell(w)-\ell(y)}\overline{P_{y,w}(q^2)}$. That is,
to get $p_{y,w}$ from $P_{y,w}$ replace $q$ by $q^2$, apply the bar
involution, and then multiply by $(-q)^{\ell(w)-\ell(y)}$.
\subsection{Multiplication rules for $C$-basis}
For $s\in S, w\in W$, we have
\begin{equation}
T_s C_w=
\begin{cases}
-q^{-1} C_w, \text{if $sw<w$}\\
qC_w+\sum\limits_{y<w, sy<y}\mu(y, w)C_y, \text{if $sw>w$}.
\end{cases}
\end{equation}

 The quantity $\mu(y,w)$, which is the coefficient of
$q^{\frac12(\ell(w)-\ell(y)-1)}$ in $P_{y,w}$, is the coefficient of
$q$ in~$(-1)^{\ell(w)-\ell(y)}p_{y,w}$. However, since Kazhdan and
Lusztig show that $\mu(y,w)$ is nonzero only when $\ell(w)-\ell(y)$
is odd, therefore $\mu(y,w)\in \Z$ can also be described as the coefficient of $q$ in $-p_{y,w}$, as above.

The following notion of $W\!$-graph was introduced by Kazhdan and Lusztig
in~\cite{KL}.
\subsection*{Definition of $W$-graph}
Since we have slightly modified the definition of Hecke algebra
used in \cite{KL}, we are forced to also slightly alter the
definition of $W\!$-graph. We define a \textit{$W\!$-graph
datum\/} to be a triple $(\Gamma, I, \mu)$ consisting of a set
$\Gamma$ (the vertices of the graph), a function
\[
I: \gamma \mapsto I_{\gamma}
\]
from $\Gamma$ to the set of all subsets of $S$, and a function
\[
\mu: \Gamma \times \Gamma \to \Z
\]
such that $\mu(\delta,\gamma) \neq 0$ if and only if $\{\delta,
\gamma\}$ is an edge of the graph. These data are subject to the
requirement that $\Alg\Gamma$, the free $\Alg$-module on~$\Gamma$,
has an $\Hecke$-module structure satisfying
\begin{equation}\label{eq:2}
T_s\gamma=
\begin{cases}
-q^{-1}\gamma& \text{if $s\in I_\gamma$}\\[5pt]
q\gamma+\sum_{\{\delta\in\Gamma\mid s\in I_\delta\}}
\mu(\delta,\gamma)\delta& \text{if $s \notin I_\gamma$,}
\end{cases}
\end{equation}
for all $s\in S$ and $\gamma\in \Gamma$. If $\tau_s$ is the
$\Alg$-endomorphism of $\Alg\Gamma$ such that $\tau_s(\gamma)$ is
the right-hand side of~Eq.~(\ref{eq:2}) then this requirement is
equivalent to the condition that for all $s,\,t\in S$ such that
$st$ has finite order, we require that
\[ \underbrace{\tau_s\tau_t\tau_s\ldots}_{\text{$m$ factors}}
=\underbrace{\tau_t\tau_s\tau_t\ldots}_{\text{$m$ factors}}
\]
where $m$ is the order of~$st$. (Note that the definition of
$\tau_s$ guarantees
that \\
$(\tau_s+q^{-1})(\tau_s-q)=0$ for all $s\in S$.)

For simplicity, if $(\Gamma,I, \mu)$ is a $W\!$-graph datum, we
say that $\Gamma$ is $W\!$-graph. We call $I_\gamma$ the
\textit{descent set\/} of the vertex $\gamma\in\Gamma$, and we
call $\mu(\delta,\gamma)$ and $\mu(\gamma,\delta)$ the
\textit{edge weights\/} associated with the edge
$\{\delta,\gamma\}$. In almost all the cases we consider it turns
out that $\mu(\gamma,\delta)=\mu(\delta,\gamma)$.

\subsection{Cells in $W\!$-graphs}
Following~\cite{KL}, given any $W\!$-graph $\Gamma$ we define a preorder relation
$\leq$ on $\Gamma$ as follows: for $\gamma, \gamma' \in \Gamma$ we say that $\gamma
\leq_{\Gamma} \gamma'$ if there exists a sequence of vertices
$\gamma= \gamma_0,\gamma_1,\cdots \gamma_n=\gamma'$ such that for each $i$
($1\leqslant i\leqslant n$), we have both $\mu(\gamma_{i-1},\gamma_i)\neq 0$ and
$I_{\gamma_{i-1}}\nsubseteq I_{\gamma_i}$. We shall refer to $\leq_\Gamma$
as the \textit{Kazhdan-Lusztig preorder\/} on~$\Gamma$.

Let $\thicksim$ be the equivalence relation on $\Gamma$ associated to the
Kazhdan-Lusztig preorder; thus $\gamma \thicksim \gamma'$ means that
$\gamma \leq_{\Gamma} \gamma'$ and $\gamma' \leq_{\Gamma} \gamma$. The corresponding
equivalence classes are called the \textit{cells\/} of $\Gamma$.

In this paper, the preorder  $\leq_{\Gamma}$ is generated by \textit{Kazhdan-Lusztig left preorder}~\cite{KL}:
$x\leq_\mathcal{L}y$ if $C_x$ occurs with nonzero coefficient in the expression of $T_sC_y$ in the $C$-basis, for some $s\in S$.
Their equivalence classes are called $\emph{left cells}$, see~\cite{KL, Lusztig, Shi} where $\emph{right cells}$ and $\emph{two-sided cells}$ are also defined.
\subsection{Left cell module}
Let $\mathfrak{C}$ be a left cell or, more generally, a union of left cells of $W$. We define an $\Hecke$-module by
$[\mathfrak{C}]_\Alg:=\mathfrak{J}_\mathfrak{C}/\hat{\mathfrak{J}_\mathfrak{C}}$ where
\[
\mathfrak{J}_\mathfrak{C}:=\langle C_w|\text{$w\leqslant_\mathcal{L}z $ for some $z\in\mathfrak{C}$}\rangle_\Alg
\]
\[
\hat{\mathfrak{J}_\mathfrak{C}}:=\langle C_w|\text{$w\notin \mathfrak{C}, w\leqslant_\mathcal{L}z $ for some $z\in\mathfrak{C}$}\rangle_\Alg
\]
are the $\Alg$-spanned modules.

This paper is organized as follows. In Sect. 1 we introduce the indexing sets $D_J, \overline{D_J}$ for the basis of $\Hecke$-module $\Hecke C_{w_J}$, and $E_J$ for the so called \textit{general Specht module}. In Sect. 2, we obtain a version of cellular basis for $\Hecke$ in general and set up the concept of \textit{general Specht module}. In Sect. 3 we show the construction of $W\!$-graph basis by introducing a new family of \textit{$E_J$-Kazhdan-Lusztig polynomials $p_{x y}$}, and show an inductive procedure for computing $p_{x y}' s$. In Sect.4 we consider an example of type $A$ and discuss the applications of our results, we show the transition between Murphy basis and $W\!$-graph basis.
\section{The indexing sets}
For each $J\subseteq S$, let $\hat{J}=S\backslash J$(the
complement of $J$) and define $W_J=\langle J\rangle$, the
corresponding parabolic subgroup of~$W$ and let $w_J\in W_J$ be the unique element of maximal length .  Let $\Hecke_J$ be the
Hecke algebra associated with~$W_J$. As is well known, $\Hecke_J$
can be identified with a subalgebra of~$\Hecke$.

\subsection{Sets $D_J$, $\overline{D_J}$ and $E_J$}
Let $D_J=\{\,w\in W\mid \ell(ws)>\ell(w)\text{ for all $s\in
J$}\,\}$, the set of minimal coset representatives of~$W/W_J$. The
following lemma is well known, it is also an easy consequence of~\cite[Prop. 5.9]{Humphreys}.
\def\deolem{\cite[Lemma 3.2]{D}}
\begin{lemma}[Deodhar \deolem]\label{lemma:4}Let $J\subseteq S$ and
$s\in S$, and define
\begin{align*}
D_{J,s}^-&=\{\,d\in D_J\mid \text{$\ell(sd)<\ell(d)$}\,\},\\
D_{J,s}^+&=\{\,d\in D_J\mid \text{$\ell(sd)>\ell(d)$ and
   $sd\in D_J$}\,\},\\
D_{J,s}^0&=\{\,d\in D_J\mid \text{$\ell(sd)>\ell(d)$ and
   $sd\notin D_J$}\,\},
\end{align*}
so that $D_J$ is the disjoint union $D_{J,s}^-\cup D_{J,s}^+\cup
D_{J,s}^0$. Then $sD_{J,s}^+=D_{J,s}^-$, and if\/ $d\in D_{J,s}^0$
then $sd=dt$ for some $t\in J$.
\end{lemma}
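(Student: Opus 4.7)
The plan is to verify the three assertions separately, using only the strong exchange/deletion condition for Coxeter groups together with elementary length-function arithmetic. For the disjoint union: given any $d \in D_J$ and $s \in S$, left multiplication by a simple reflection strictly changes the length, so either $\ell(sd) < \ell(d)$ (giving $D_{J,s}^-$) or $\ell(sd) > \ell(d)$; in the latter case we further split according to whether $sd \in D_J$ (giving $D_{J,s}^+$) or not (giving $D_{J,s}^0$). This trivially yields the claimed disjoint decomposition of $D_J$.

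For the bijection $sD_{J,s}^+ = D_{J,s}^-$: if $d \in D_{J,s}^+$, then $sd \in D_J$ and $\ell(s(sd)) = \ell(d) < \ell(sd)$, so $sd \in D_{J,s}^-$; this gives the forward inclusion. Conversely, given $d' \in D_{J,s}^-$, set $d := sd'$. Then $sd = d' \in D_J$ and $\ell(sd) = \ell(d') > \ell(d)$, so the only nontrivial point is to verify $d \in D_J$. Suppose otherwise, so that $\ell(dt) < \ell(d)$ for some $t \in J$. Then $\ell(sd't) = \ell(dt) \leq \ell(d') - 2$, and hence $\ell(d't) \leq \ell(sd't) + 1 \leq \ell(d') - 1 < \ell(d')$, contradicting $d' \in D_J$. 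Thus $d \in D_{J,s}^+$ and $sd = d'$, giving the reverse inclusion.

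For the $D_{J,s}^0$ case: let $d \in D_{J,s}^0$. Since $sd \notin D_J$, there exists $t \in J$ with $\ell(sdt) < \ell(sd)$. Choose a reduced expression $s_1 s_2 \cdots s_k$ for $d$; since $\ell(sd) = \ell(d) + 1$, the word $s\, s_1 \cdots s_k$ is a reduced expression for $sd$. The Exchange Condition applied to this reduced expression and right multiplication by $t$ says that $sdt$ equals $s\, s_1 \cdots s_k$ with one letter deleted. Deleting some $s_i$ would produce $dt = s_1 \cdots \widehat{s_i} \cdots s_k$ with $\ell(dt) \leq \ell(d) - 1$, contradicting $d \in D_J$; therefore it must be $s$ that gets deleted, producing $sdt = s_1 \cdots s_k = d$, i.e., $sd = dt$ for some $t \in J$. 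The only real obstacles are the verification that $sd' \in D_J$ in the second paragraph and the correct invocation of the Exchange Condition in the third, and each is handled by the short length-function arguments above.
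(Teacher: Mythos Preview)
Your proof is correct. The paper does not supply its own proof of this lemma; it simply cites Deodhar \cite[Lemma~3.2]{D} and remarks that the result is also an easy consequence of \cite[Prop.~5.9]{Humphreys}. Your direct argument---trivial trichotomy for the disjoint union, a length-function estimate to show $sd'\in D_J$ for the bijection $sD_{J,s}^+=D_{J,s}^-$, and the Exchange Condition applied to the reduced word $s\,s_1\cdots s_k$ for the $D_{J,s}^0$ case---is exactly the standard elementary verification that the paper is alluding to, so there is nothing to compare.
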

Define
\begin{equation}\label{eq:3}
E_J=\{\,d\in W\mid \text{$\ell(ds)<\ell(d)$ for all $s \in J$ and
$\ell(ds)>\ell(d)$ for all $s \notin J$}\,\}
\end{equation}
that is, $E_J$ is the set of maximal coset representatives of~$W/W_J$
and the minimal ones of~$W/W_{\hat{J}}$. Clearly $\sharp
E_J=\sharp E_{\hat{J}}$, where $E_{\hat{J}}$ was introduced and
written as $Y_J$ in~\cite{Solomon}.

Let $\leq_\mathscr{L}$ denote the \textit{left weak Bruhat order} on $W$.
That is, $x\leq_\mathscr{L}y$ if and only if $y=wx$ for
some $w \in W$ such that $\ell(y)=\ell(w)+\ell(x)$.  McDonough-Pallikaros~\cite{Pall} also say that $x$ is a {\it prefix of $y$} if $x\leq_\mathscr{L}y$. Given $x,y\in
W$ let $[x,y]_\mathscr{L}=\{z \in  w \mid x\leq_\mathscr{L}
z\leq_\mathscr{L} y\}$ be the left interval they determine.

Let
\[
\overline{D_J}=D_Jw_J,
\]
 then
\[
\overline{D_J}=\{\,d\in W\mid \text{$\ell(ds)<\ell(d)$ for all $s
\in J$}\}
\]
is the set of longest coset representatives of $W_J$ in $W$. Thus,
\[ E_J=\overline{D_J}\cap
D_{\hat{J}},
\]
and directly from the definition,
\[\overline{D_J}=\bigcup_{J\subseteq
K\subseteq S}E_K,\] where the union is disjoint.
\begin{proposition}\label{prop:3}Let $J\subseteq S$ and
$s\in S$, we define
\begin{align*}
E_{J,s}^-&=\{\,d\in E_J\mid \text{$\ell(sd)<\ell(d)$ and $sd\in E_J$}\,\},\\
E_{J,s}^+&=\{\,d\in E_J\mid \text{$\ell(sd)>\ell(d)$ and
   $sd\in E_J$}\,\},\\
E_{J,s}^0&=\{\,d\in E_J\mid \text{ $sd\notin E_J$}\,\}
\end{align*}
so that $E_J$ is the disjoint union $E_{J,s}^-\cup E_{J,s}^+\cup
E_{J,s}^0$, then $sE_{J,s}^+=E_{J,s}^-$; let
\begin{align*}
E_{J,s}^{0,-}&=\{\,d\in E_J\mid \text{$\ell(sd)<\ell(d)$ and
   $sd\notin E_J$}\,\},\\
E_{J,s}^{0,+}&=\{\,d\in E_J\mid \text{$\ell(sd)>\ell(d)$ and
   $sd\notin E_J$}\,\},
\end{align*}
then $E_{J,s}^0=E_{J,s}^{0,-}\bigcup E_{J,s}^{0,+}$(disjoint
union); if\/ $d\in E_{J,s}^{0,-}$ then $sd=dt$ for some $t\in J$,
if \/ $d\in E_{J,s}^{0,+}$ then $sd=dt$ for some $t\in \hat{J}$.
\end{proposition}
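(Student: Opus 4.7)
My plan is to dispense with the skeleton statements quickly: the disjointness of $E_{J,s}^-$, $E_{J,s}^+$, $E_{J,s}^0$ (and of $E_{J,s}^{0,\pm}$) is built into the definitions, and the bijection $sE_{J,s}^+=E_{J,s}^-$ follows from $s^2=1$ together with the symmetric role of $d$ and $sd$ in the definition of $E_{J,s}^{\pm}$. The substance of the proposition is to produce, for $d \in E_{J,s}^{0,\pm}$, a simple reflection $t \in \hat J$ (respectively $t \in J$) such that $sd=dt$. I would prove these two assertions in parallel, reducing each to Deodhar's lemma via the complementary inclusions $E_J \subseteq D_{\hat J}$ and $E_J \subseteq \overline{D_J} = D_J w_J$.

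For the ``$+$'' case, I would apply Deodhar's lemma to $d \in D_{\hat J}$ with respect to the left simple reflection $s$. Since $\ell(sd)>\ell(d)$, $d$ must lie in $D_{\hat J,s}^+$ or $D_{\hat J,s}^0$; the latter yields $sd = dt$ with $t \in \hat J$ immediately. To rule out $D_{\hat J,s}^+$, note that $sd \in D_{\hat J}$ places the right descent set of $sd$ inside $J$, and then check that every $r \in J$ is actually a right descent of $sd$ via $\ell(sdr) \le \ell(dr)+1 = \ell(d) < \ell(sd)$. This forces $sd \in \overline{D_J}\cap D_{\hat J} = E_J$, contradicting $sd \notin E_J$.

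For the ``$-$'' case, I would factor $d = d'w_J$ with unique $d' \in D_J$ and $\ell(d) = \ell(d') + \ell(w_J)$, and then apply Deodhar's lemma to $d'$. Two of its three branches are excluded by length bookkeeping: (a) if $s$ were a left descent of $d'$, then $sd = sd'\cdot w_J$ would be reduced with $sd' \in D_J$, so $sd \in \overline{D_J}$; the analogous length trick applied to $r \in \hat J$ puts $sd$ into $D_{\hat J}$ as well, forcing $sd \in E_J$ contrary to hypothesis; (b) if $s$ were a left ascent of $d'$ with $sd' \in D_J$, then $\ell(sd) = \ell(d)+1$ would contradict $sd<d$. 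The remaining case $d' \in D_{J,s}^0$ yields $sd' = d't$ with $t \in J$, whence $sd = d'tw_J = d\cdot(w_J t w_J) = dt'$, and the standard fact that conjugation by $w_J$ permutes the simple reflections $J$ of the finite Coxeter group $W_J$ gives $t' = w_J t w_J \in J$.

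The main technical obstacle is the ``$-$'' case: Deodhar's lemma is stated for \emph{minimal} coset representatives only, so one must first transport $d$ out of its maximal status via the factorisation $d = d'w_J$; and the closing identification $t'\in J$ depends on the identity $w_J J w_J = J$, a group-theoretic input that has not been used earlier in the paper and that I would state explicitly.
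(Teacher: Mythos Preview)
Your proposal is correct and follows essentially the same route as the paper: both arguments use Deodhar's lemma applied to $D_{\hat J}$ and (after the factorization $d=d'w_J$) to $D_J$, finishing the $E_{J,s}^{0,-}$ case with the identity $w_J J w_J = J$. The only cosmetic difference is the order of operations in the $E_{J,s}^{0,+}$ case: the paper first establishes $sd\in\overline{D_J}$ via the factorization (ruling out $d'\in D_{J,s}^-$ and $d'\in D_{J,s}^0$) and then invokes Deodhar for $D_{\hat J}$, whereas you apply Deodhar to $D_{\hat J}$ first and dispatch the leftover branch with a direct length inequality---but the content is the same.
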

\begin{proof}
For any $d \in E_J$, we write $d=d'w_J$, where $d'\in D_J$ and
$w_J$ the longest element of $W_J$. Given $s\in S$, we have either
$sd<d$ or $sd>d$.

Case(a): if $sd<d$ then we have either $sd\in E_J$ or $sd\notin
E_J$. If $sd\in E_J$ then $d\in E_{J,s}^-$.

We now consider the case $sd\notin E_J$.
Since $d\in E_J$( that is, $d\in \overline{D_J}$ and $d\in
D_{\hat{J}}$)and $sd<d$, according to Lemma~\ref{lemma:4} we have
$sd \in D_{\hat{J}}$ . Thus $sd\notin \overline{D_J}$, that is
$sd'\notin D_J$, this is the case $d'\in D_{J,s}^0$ in the statement of
Lemma~\ref{lemma:4} , so we have $sd'>d'$ and $sd'=d't$ for some
$t\in J$, and
\[
sd=s(d'w_J)=(sd')w_J=(d't)w_J=(d'w_J)t'=dt'
\]
where $t'=w_J t w_J\in J$. This is the case $d\in E_{J,s}^{0,-}$.

Case(b): if $sd>d$ then again we have either $sd\in E_J$ or $sd\notin
E_J$. If $sd\in E_J$ then $d\in E_{J,s}^+$. we consider the case $sd\notin
E_J$.

Since $sd=s(d'w_J)=(sd')w_J$,  where $d'\in D_{J,s}^+ $
( according to the above discussion, the case
$d' \in D_{J,s}^0$ can not happen , and clearly $d'\notin D_{J,s}^-$). So $sd\in
\overline{D_J}$, and by the assumption $sd \notin E_J$, we
have $sd\notin D_{\hat{J}}$.

Applying Lemma~\ref{lemma:4} to the set $D_{\hat{J}}$, we
have $sd=dt$ for some $t\in \hat{J}$,  which is the case $d\in
E_{J,s}^{0,+}$.
\end{proof}

For $w\in W$ we set $\mathcal{L}(w)=\{s\in S; sw<w\}, \mathcal{R}(w)=\{s\in S; ws<w\}$ and refer them to be the~\emph{left and right descent set} of $w$.

\begin{lemma}~\cite{KL}\cite[Prop.8.6]{Lusztig}
Let $w, w'\in W$, then

(a) if $w\leq_{\mathcal{L}}w'$, then $\mathcal{R}(w')\subseteq \mathcal{R}(w)$. If $w\sim_{\mathcal{L}}w'$, then $\mathcal{R}(w')=\mathcal{R}(w)$.

(b) if $w\leq_{\mathcal{R}}w'$, then $\mathcal{L}(w')\subseteq \mathcal{L}(w)$. If $w\sim_{\mathcal{R}}w'$, then $\mathcal{L}(w')=\mathcal{L}(w)$.
\end{lemma}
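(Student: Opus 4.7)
My strategy is to establish part~(a) directly and then derive part~(b) from it by the standard left--right symmetry of $\Hecke$. For (a), by transitivity of $\leq_{\mathcal{L}}$ it suffices to verify $\mathcal{R}(w')\subseteq\mathcal{R}(w)$ in the single generating step, namely when $C_w$ occurs with nonzero coefficient in some $T_sC_{w'}$; iterating along the chain that witnesses $w\leq_{\mathcal{L}}w'$ then yields the full inclusion, and applying the inclusion in both directions when $w\sim_{\mathcal{L}}w'$ gives the asserted equality.

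So I write $T_sC_{w'}=\sum_x a_xC_x$ with $a_w\neq 0$. If $sw'<w'$, the multiplication rule in the preliminaries gives $T_sC_{w'}=-q^{-1}C_{w'}$, forcing $w=w'$ and there is nothing to prove. Assume $sw'>w'$ and fix any $t\in\mathcal{R}(w')$; the task is to show $t\in\mathcal{R}(x)$ for every $x$ in the support, in particular for $x=w$. My tool will be the right-hand analog of the multiplication rule, obtained by applying the antiautomorphism $T_u\mapsto T_{u^{-1}}$ of $\Hecke$ (which sends $C_u$ to $C_{u^{-1}}$) to the stated formula: for $v\in W$ and $t\in S$, $C_vT_t=-q^{-1}C_v$ if $vt<v$, and otherwise $C_vT_t=qC_v+\sum_{u<v,\,ut<u}\mu(u,v)C_u$.

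The central computation is a two-way evaluation of $T_sC_{w'}T_t$. On one hand, $C_{w'}T_t=-q^{-1}C_{w'}$ yields $T_sC_{w'}T_t=-q^{-1}\sum_x a_xC_x$. On the other, termwise expansion gives $\sum_x a_x(C_xT_t)$. I then compare the coefficient of $C_x$ on both sides for an arbitrary $x$ with $a_x\neq 0$: the only terms $C_{x'}T_t$ that can contribute to $C_x$ are $x'=x$ itself, and, provided $xt<x$, possibly some $x'>x$ with $x't>x'$ and $\mu(x,x')\neq 0$. Consequently, if $xt>x$, the total $C_x$-coefficient on the left is exactly $qa_x$, which cannot equal $-q^{-1}a_x$ since $q+q^{-1}\neq 0$ in $\Alg$. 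This forces $xt<x$, that is, $t\in\mathcal{R}(x)$, as required.

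Part~(b) then follows from (a) by conjugating with the antiautomorphism $T_u\mapsto T_{u^{-1}}$, which exchanges $\leq_{\mathcal{L}}$ with $\leq_{\mathcal{R}}$ and $\mathcal{L}$ with $\mathcal{R}$. The hard part will be the coefficient bookkeeping in the second step: the shape of the right multiplication rule---specifically, the restriction of the sum over $u<v$ in $C_vT_t$ to those $u$ with $ut<u$---is precisely what guarantees that contributions from other basis elements to the $C_x$-coefficient can occur only when $xt<x$, which is what makes the contradiction in the case $xt>x$ possible.
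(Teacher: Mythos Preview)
Your argument is correct and is essentially the classical proof from Kazhdan--Lusztig: reduce to a single generating step of $\leq_{\mathcal{L}}$, compute $T_sC_{w'}T_t$ in two ways using $C_{w'}T_t=-q^{-1}C_{w'}$ for $t\in\mathcal{R}(w')$, and observe that the restriction $ut<u$ in the right multiplication rule forces the $C_x$-coefficient to be $qa_x$ whenever $xt>x$, contradicting $-q^{-1}a_x$ since $(q+q^{-1})$ is a nonzerodivisor in $\Alg$. The symmetry argument for (b) via $T_u\mapsto T_{u^{-1}}$ is also standard and valid, since this antiautomorphism does send $C_u$ to $C_{u^{-1}}$ (it commutes with the bar involution and preserves the triangularity condition).

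Note, however, that the paper does not supply its own proof of this lemma: it is simply quoted from \cite{KL} and \cite[Prop.~8.6]{Lusztig}. So there is no in-paper argument to compare against; what you have written is precisely the proof one finds in those references.
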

The linear map $\varepsilon_J: \Hecke_J\rightarrow\Alg$ defined by $\varepsilon_J(T_w)=\epsilon_w q^{-\ell(w)}$ for any $w\in W_J$ is an algebra
homomorphism, called the sign representation. We denote by $\Ind^S_J(\varepsilon_J)$, the $\Hecke$-module obtained by induction from $\varepsilon_J$.

We now introduce the element $C_{w_J}$ in the Kazhdan-Lusztig $C$-basis of $\Hecke$. By~\cite[Cor. 12.2]{Lusztig},  it has the expression
\[
C_{w_J}=\epsilon_{w_J} q^{\ell(w_J)}\sum_{w\in W_J}\epsilon_w q^{-\ell(w)}T_w.
\]
\begin{lemma}~\cite[Lemma 2.8]{Geck1}
The followings hold

(a) For any $w\in W_J$, we have $T_wC_{w_J}=\epsilon_w q^{-\ell(w)}C_{w_J}$.

(b) We have $C_{w_J}^2=\epsilon_{w_J} q^{-\ell(w_J)}P_JC_{w_J}$, where $P_J=\sum_{w\in W_J}q^{2\ell(w)}$.

(c) The set $\overline{D_J}=D_J w_J$ is a union of left cells in $W$, we have
\[
\overline{D_J}=\{w\in W\mid w\leq_\mathcal{L}w_J\},
\]
and $[\overline{D_J}]_\Alg\cong \Ind^S_J(\varepsilon_J)\cong\Hecke C_{w_J}$ (isomorphisms as left $\Hecke$-modules).
\end{lemma}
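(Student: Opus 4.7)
I would dispatch (a) directly from the multiplication rule recalled in the preliminaries, derive (b) as a one-line consequence of (a), and assemble (c) from the $T$-basis triangularity of $C_{w_J}$, Frobenius reciprocity, and the classical identification $\overline{D_J}=\{w\mid w\leq_{\mathcal{L}}w_J\}$. For (a) itself, the identity $T_sC_w=-q^{-1}C_w$ valid whenever $sw<w$ applies to every $s\in J$ against $C_{w_J}$, since $w_J$ is the longest element of $W_J$. For a general $w\in W_J$ I would take a reduced expression $w=s_1\cdots s_k$ with all $s_i\in J$ and iterate to get $T_wC_{w_J}=(-q^{-1})^kC_{w_J}=\epsilon_w q^{-\ell(w)}C_{w_J}$.

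For (b), I would substitute the explicit expansion of $C_{w_J}$ into the left factor of $C_{w_J}^2$ and apply (a) termwise; with $\epsilon_w^2=1$ this yields
\[
C_{w_J}^2=\epsilon_{w_J}q^{\ell(w_J)}\Bigl(\sum_{w\in W_J}q^{-2\ell(w)}\Bigr)C_{w_J}.
\]
The involution $w\mapsto ww_J$ of $W_J$ satisfies $\ell(ww_J)=\ell(w_J)-\ell(w)$, converting the inner sum into $q^{-2\ell(w_J)}P_J$, which is the identity in (b).

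For (c) I would first show that $\{T_dC_{w_J}\mid d\in D_J\}$ is an $\Alg$-basis of $\Hecke C_{w_J}$. Any $T_wC_{w_J}$ can be rewritten as $T_dT_vC_{w_J}=\epsilon_v q^{-\ell(v)}T_dC_{w_J}$ via (a), using the factorisation $w=dv$ with $d\in D_J$, $v\in W_J$, $\ell(w)=\ell(d)+\ell(v)$; linear independence comes from the triangular expansion $C_{w_J}=T_{w_J}+\sum_{u<w_J}p_{u,w_J}T_u$ together with $T_dT_{w_J}=T_{dw_J}$ for $d\in D_J$, which makes $T_dC_{w_J}=T_{dw_J}+(\text{shorter terms})$ in the $T$-basis. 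This establishes $\mathrm{rank}_{\Alg}\Hecke C_{w_J}=|D_J|$ and, combined with (a) and Frobenius reciprocity, yields the $\Hecke$-isomorphism $\Ind^S_J(\varepsilon_J)\xrightarrow{\sim}\Hecke C_{w_J}$ sending $1\otimes 1\mapsto C_{w_J}$. Next, $T_sC_y\in\langle C_z\mid z\leq_{\mathcal{L}}y\rangle_{\Alg}$ by definition of $\leq_{\mathcal{L}}$, so $\Hecke C_{w_J}\subseteq\langle C_w\mid w\leq_{\mathcal{L}}w_J\rangle_{\Alg}$; invoking the classical identification $\{w\leq_{\mathcal{L}}w_J\}=\overline{D_J}$ together with the rank count upgrades this to an equality, so $\{C_w\mid w\in\overline{D_J}\}$ is an $\Alg$-basis of $\Hecke C_{w_J}$. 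Finally, downward-closedness of $\overline{D_J}$ under $\leq_{\mathcal{L}}$ makes it $\sim_{\mathcal{L}}$-closed and hence a union of left cells, so $\hat{\mathfrak{J}}_{\overline{D_J}}=0$ and $[\overline{D_J}]_{\Alg}=\mathfrak{J}_{\overline{D_J}}=\Hecke C_{w_J}$.

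The main obstacle is the classical identification $\{w\leq_{\mathcal{L}}w_J\}=\overline{D_J}$; without that input one only reaches the inclusion $\Hecke C_{w_J}\subseteq\langle C_w\mid w\leq_{\mathcal{L}}w_J\rangle_{\Alg}$. The $T$-basis triangularity and the Frobenius-reciprocity step are then routine.
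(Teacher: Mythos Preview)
The paper does not prove this lemma at all: it is quoted verbatim from Geck \cite[Lemma~2.8]{Geck1} and used as a black box. Your derivations of (a) and (b) are correct and are exactly the standard ones (iterate $T_sC_{w_J}=-q^{-1}C_{w_J}$, then plug the explicit expansion of $C_{w_J}$ into the left factor and use the involution $w\mapsto ww_J$ on $W_J$).

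For (c) your outline is fine, and you are right that the crux is the equality $\{w\mid w\leq_{\mathcal L}w_J\}=\overline{D_J}$. Note, however, that one inclusion is already available in the paper and does not need to be imported as an external ``classical'' fact: by the result quoted just above (Lemma~1.3(a) here, i.e.\ \cite[Prop.~8.6]{Lusztig}), $w\leq_{\mathcal L}w_J$ forces $\mathcal R(w_J)=J\subseteq\mathcal R(w)$, so $\{w\leq_{\mathcal L}w_J\}\subseteq\overline{D_J}$. Combined with your rank count this already gives $\Hecke C_{w_J}\subseteq\langle C_w\mid w\leq_{\mathcal L}w_J\rangle_{\Alg}\subseteq\langle C_w\mid w\in\overline{D_J}\rangle_{\Alg}$, with the outer terms both free of rank $|\overline{D_J}|$. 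To close the loop one still needs $C_w\in\Hecke C_{w_J}$ for each $w\in\overline{D_J}$, which is the genuinely nontrivial half (equivalent to Deodhar's parabolic description of the $C$-basis, and the content of Geck's cited lemma); your honesty about this gap is appropriate. Once that inclusion is in hand, $\hat{\mathfrak J}_{\overline{D_J}}=0$ and $[\overline{D_J}]_{\Alg}=\Hecke C_{w_J}$ follow exactly as you say.
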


\begin{proposition}
For $J\subseteq S$, then

(1) $E_J$ is the left cell, or union of left cells with right descent set $J$.

 (2) The Bruhat order $\leq$ for the elements of $E_J$ is exactly the weak order $\leq_\mathscr{L}$. If
 $x, y\in E_J$ and $x\leq y$, then $\bigl[x, y\bigr]_\mathscr{L} \subseteq E_J$.
\end{proposition}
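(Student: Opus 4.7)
For part (1), my plan is to combine the preceding lemma (that $\overline{D_J}$ is a union of left cells) with the Kazhdan--Lusztig observation recalled above, namely that the right descent set $\mathcal{R}$ is constant on each left cell. Since $\overline{D_J}=\bigsqcup_{J\subseteq K\subseteq S}E_K$ is precisely the partition of $\overline{D_J}$ according to the value of $\mathcal{R}$, each stratum $E_K$, and in particular $E_J$, is automatically a union of left cells.

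For the interval statement in part (2), I would argue directly from the definition of the weak order. If $z\in[x,y]_{\mathscr L}$ then $z=ux$ and $y=vz$ with $\ell(z)=\ell(u)+\ell(x)$ and $\ell(y)=\ell(v)+\ell(z)$, so concatenating reduced expressions displays $x$ as a right factor of some reduced word for $z$ and $z$ as a right factor of one for $y$; consequently $\mathcal{R}(x)\subseteq \mathcal{R}(z)\subseteq \mathcal{R}(y)$. Since both flanking descent sets equal $J$ we conclude $\mathcal{R}(z)=J$, placing $z$ in $E_J$.

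For the coincidence of the two orders on $E_J$, the direction $\leq_{\mathscr L}\Rightarrow\leq$ is immediate. For the converse I would induct on $\ell(y)-\ell(x)$. In the inductive step $x<y$, write $y=y'w_J$ with $y'\in D_J$; since the subword criterion makes $w_J$ the Bruhat-minimum of $E_J$ and $x<y$ rules out $y=w_J$, we have $y'\ne e$, so some $s\in\mathcal{L}(y')$ exists. A short computation combining Deodhar's lemma ($sy'\in D_J$) with the identity $\ell(s(ys'))=\ell((sy)s')$ evaluated two ways for $s'\in\hat J$ confirms that $sy=(sy')w_J$ still lies in $E_J$, with $\ell(sy)=\ell(y)-1$. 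The goal is then to choose $s$ so that additionally $x\leq sy$ in Bruhat order; the inductive hypothesis yields $x\leq_{\mathscr L}sy$, which combined with $sy\leq_{\mathscr L}y$ (automatic from $y=s\cdot sy$ with lengths adding) delivers $x\leq_{\mathscr L}y$.

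The main obstacle will be securing the Bruhat inequality $x\leq sy$. When some $s\in\mathcal{L}(y')$ lies outside $\mathcal{L}(x)$, the standard lifting property of Bruhat order supplies $x\leq sy$ at once. The harder case is $\mathcal{L}(y')\subseteq\mathcal{L}(x)$, where lifting only guarantees $sx\leq sy$. To handle this I would exploit the shared descent set $\mathcal{R}(x)=\mathcal{R}(y)=J$: either by invoking the Björner--Wachs theorem (that Bruhat and weak orders agree on any subset of $W$ of constant right descent set), or by reducing via the parabolic factorizations $x=x'w_J,\;y=y'w_J$ to a Bruhat inequality inside $D_J$ and iterating the descent on the $D_J$-side until a valid $s$ emerges.
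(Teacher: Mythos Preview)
Your argument for part~(1) is exactly what the paper has in mind: combine Lemma~1.4 (that $\overline{D_J}$ is a union of left cells) with Lemma~1.3 (that $\mathcal R$ is constant on left cells), and observe that the decomposition $\overline{D_J}=\bigsqcup_{K\supseteq J}E_K$ is precisely the partition of $\overline{D_J}$ by right descent set. Your proof of the interval statement in~(2) via the monotonicity $\mathcal R(x)\subseteq\mathcal R(z)\subseteq\mathcal R(y)$ along weak chains is clean and correct; the paper instead points to Proposition~1.2, which gives the same conclusion step by step (if $z\in E_J$ and $sz>z$ lies on a weak chain to $y\in E_J$, the case $z\in E_{J,s}^{0,+}$ would force $t=z^{-1}sz\in\hat J$ into $\mathcal R(y)=J$, a contradiction).

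The gap is in the ``Bruhat $=$ weak'' direction. Your induction and use of lifting are the right moves, and you correctly isolate the obstruction $\mathcal L(y')\subseteq\mathcal L(x)$. But neither of your proposed escapes is solid. There is no Bj\"orner--Wachs theorem asserting that Bruhat and left weak order coincide on an arbitrary subset with constant right descent set; if there were, the entire statement would be a one-line corollary and your induction would be unnecessary. Your second suggestion, passing to the factorizations $x=x'w_J$, $y=y'w_J$ and ``iterating the descent on the $D_J$-side until a valid $s$ emerges,'' does not obviously terminate: even if $s\in\mathcal L(y')\setminus\mathcal L(x')$, it can happen that $x'\in D_{J,s}^0$, in which case $sx=x t'$ with $t'\in J$ and hence $sx<x$, so lifting again gives only $sx\le sy$ and you are back in the hard case. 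You need an argument that rules out this loop, and Proposition~1.2 (which is all the paper invokes) gives you exactly the dichotomy $sx\in E_J$ versus $sx=xt'$ that you must exploit, but you have not shown how. In short: the architecture is right, but the hard case is not closed, and the black box you reach for does not exist.
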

\begin{proof}
(1) is directly from Lemma 1.3 and 1.4.

(2) is from Prop. 1.2.
\end{proof}

\textbf{Remark} For convenience, in the following sections we still use the usual notations of Bruhat order $\leq , <$ for the weak Bruhat orders $\leq_\mathscr{L}, <_\mathscr{L}$ for the elements of $E_J$, unless indicated.
\subsection{Some multiplication rules}
For $J\subseteq S$, let $M^J=\Hecke C_{w_J}$ be a $\Hecke$-module, then
\begin{lemma}
(1) Let $J\subseteq S$, then $M^J$ is a free $\Alg$-module with basis
\[
\text{$\{ T_wC_{w_J}\mid w\in D_J\}$, or alternatively $\{ T_wC_{w_J}\mid w\in \overline{D_J}\}$}.
\]
the multiplication of $\Hecke$ with respect to this basis:
\[
T_s(T_wC_{w_J})=
\begin{cases}
T_{sw}C_{w_J}+(q-q^{-1})T_w C_{w_J}&\text{if $w\in D_{J,s}^-$ or $w\in \overline{D}_{J,s}^-$}\\
T_{sw}C_{w_J}
&\text{if $w\in D_{J,s}^+$ or $w\in \overline{D}_{J,s}^+$}\\
-q^{-1}T_w C_{w_J} & \text{if $w\in D_{J,s}^0$ or $w\in\overline{D}_{J,s}^0$ }
\end{cases}
\]
for all $s\in S$.

(2) For $w\in E_J$, we have :
\[
T_s(T_wC_{w_J})=
\begin{cases}
T_{sw}C_{w_J}+(q-q^{-1})T_w C_{w_J}&\text{if $w\in E_{J,s}^-$}\\
T_{sw}C_{w_J}
&\text{if $w\in E_{J,s}^+$}\\
-q^{-1}T_w C_{w_J} & \text{if $w\in E_{J,s}^{0,-}$ }\\
qT_wC_{w_J}+T_wC_{tw_J}& \text{if $w\in E_{J,s}^{0,+}, t=w^{-1}sw\in \hat{J}$}
\end{cases}
\]
\end{lemma}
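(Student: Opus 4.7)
The plan is to prove (1) and (2) in sequence by combining the Hecke relation Eq.~(\ref{eq:1}) with the coset-representative analysis of Lemma~\ref{lemma:4} and Proposition~\ref{prop:3}, together with the explicit action of $\Hecke_J$ on $C_{w_J}$ given in Lemma 1.4.

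For part (1), I would first justify both bases. Lemma 1.4(c) identifies $M^J$ with $\Ind^S_J(\varepsilon_J)$, which is $\Alg$-free of rank $|D_J|=|\overline{D_J}|$, so it suffices to show one candidate set is linearly independent. Expanding $T_dC_{w_J}$ for $d\in D_J$ via the explicit formula for $C_{w_J}$ produces a leading Bruhat term proportional to $T_{dw_J}$ (by length additivity $\ell(dw_J)=\ell(d)+\ell(w_J)$), and these leading indices are distinct as $d$ varies, forcing linear independence. For the $\overline{D_J}$-basis, writing $\overline{d}=dw_J$ and using Lemma 1.4(a) gives
\[
T_{\overline{d}}C_{w_J}=T_dT_{w_J}C_{w_J}=\epsilon_{w_J}q^{-\ell(w_J)}T_dC_{w_J},
\]
so it is a scalar rescaling of the first. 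The multiplication rules then follow case-by-case from Deodhar's trichotomy: $w\in D_{J,s}^{\pm}$ is handled by Eq.~(\ref{eq:1}) directly, while for $w\in D_{J,s}^{0}$, Lemma~\ref{lemma:4} gives $sw=wt$ with $t\in J$, hence $T_sT_wC_{w_J}=T_wT_tC_{w_J}=-q^{-1}T_wC_{w_J}$ by Lemma 1.4(a). The $\overline{D_J}$-version of each case follows by the rescaling identity above.

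For part (2), cases $E_{J,s}^{\pm}$ are immediate from Eq.~(\ref{eq:1}). For $w\in E_{J,s}^{0,-}$, Proposition~\ref{prop:3} gives $sw=wt$ with $t\in J$ and $wt<w$; then $T_wT_t=T_{wt}+(q-q^{-1})T_w$, and Lemma 1.4(a) yields $T_{wt}C_{w_J}=-qT_wC_{w_J}$, which combined with $T_sT_w=T_{sw}+(q-q^{-1})T_w$ produces the coefficient $-q^{-1}$.

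The main obstacle is the case $E_{J,s}^{0,+}$, where $sw=wt$ with $t=w^{-1}sw\in\hat{J}$. Here $T_sT_wC_{w_J}=T_wT_tC_{w_J}$, and everything reduces to evaluating $T_tC_{w_J}$ for $t\notin J$. The crucial observation is that any $y\leq w_J$ in Bruhat order lies in $W_J$ (reduced subwords of a reduced expression for $w_J\in W_J$ use only simple reflections from $J$), so $\mathcal{L}(y)\subseteq J$ and $T_tT_y=T_{ty}$ length-additively for all such $y$ and all $t\in\hat{J}$. A direct expansion then shows $T_tC_{w_J}-qC_{w_J}$ is bar-invariant, has leading Bruhat term $T_{tw_J}$, and has all remaining $T_y$-coefficients in $q\Alg^+$; by the uniqueness characterization of $C_{tw_J}$, therefore $T_tC_{w_J}=qC_{w_J}+C_{tw_J}$. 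Substituting gives the desired $T_sT_wC_{w_J}=qT_wC_{w_J}+T_wC_{tw_J}$, completing the lemma.
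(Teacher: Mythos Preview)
Your proof is correct and follows the same case-by-case structure as the paper's argument, using Lemma~\ref{lemma:4} and Proposition~\ref{prop:3} together with the action of $\Hecke_J$ on $C_{w_J}$. The one noteworthy difference is in the $E_{J,s}^{0,+}$ case: the paper simply invokes the Kazhdan--Lusztig multiplication rule (Eq.~(2)) to write $T_tC_{w_J}=qC_{w_J}+C_{tw_J}$ for $t\in\hat J$ (the sum over $y<w_J$ with $ty<y$ being empty, since every $y\leq w_J$ lies in $W_J$ and hence has $ty>y$), whereas you re-derive this identity from scratch via bar-invariance and the uniqueness characterization of $C_{tw_J}$. Your route is more self-contained but longer; the paper's is quicker since Eq.~(2) is already in hand. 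For the basis claim in part~(1), the paper argues spanning directly (writing $w=dv$ with $d\in D_J$, $v\in W_J$ and applying Lemma~1.4(a)) and leaves freeness implicit, while your leading-Bruhat-term argument makes linear independence explicit; the rescaling identity $T_{dw_J}C_{w_J}=\epsilon_{w_J}q^{-\ell(w_J)}T_dC_{w_J}$ that you use to pass between the $D_J$- and $\overline{D_J}$-bases is exactly what underlies the paper's computation for $D_{J,s}^0$ versus~$\overline{D}_{J,s}^0$.
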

\begin{proof}(1) $M^J$ is spanned by the elements $T_wC_{w_J}$, where $w\in W$; however, if $w=dv$ for $d\in D_J$ and $v\in W_J$, then $T_wC_{w_J}=\varepsilon_v q^{-\ell(v)}T_d C_{w_J}$. It follows that $M^J$ is a free $\Alg$-module with the basis shown and it remains to verify the multiplication formulae.

According to Eq. (1) we immediately get the first two rules. By the multiplication formula for the $C$-basis elements( Eq. (2)), we have:
\[
T_s C_{w_J}=
\begin{cases}
-q^{-1}C_{w_J}&\text{if $s\in J$}\\
qC_{w_J}+ C_{s w_J}&\text{if $s\in \hat{J}$}
\end{cases}
\]
if $w\in D_{J,s}^{0}$, let $t=w^{-1}sw$ and $t\in J$ then $sw=wt<w$, we have
\begin{align*}
T_s(T_wC_{w_J})&=\bigl[T_{sw}+(q-q^{-1})T_w \bigr]C_{w_J}\\
&=\bigl[T_{wt}+(q-q^{-1})T_w \bigr]C_{w_J}\\
&=\bigl[T_{wt}(T_t{T_t}^{-1})+(q-q^{-1})T_w \bigr]C_{w_J}\\
&=T_w{T_t}^{-1}C_{w_J}+(q-q^{-1})T_w C_{w_J}\\
&=T_w\bigl[T_t+(q^{-1}-q)\bigr]C_{w_J}+(q-q^{-1})T_w C_{w_J}\\
&=-q^{-1}T_wC_{w_J}.
\end{align*}
(2)
If $w\in E_{J,s}^{0,+}$ and $t=w^{-1}sw\in \hat{J}$, again by the multiplication rules for $C_{w_J}$
\[
T_s(T_wC_{w_J})=T_w(T_tC_{w_J})=T_w(qC_{w_J}+C_{tw})
\]
\end{proof}
\section{A cellular basis and generic Specht modules}
The concept of "cellular algebras" was introduced by Graham-Lehrer~\cite{Lehrer}. It provides a systematic framework for studying the representation theory of non-semisimple algebras which are deformations of semisimple ones. The original definition was modeled on properties of the Kazhdan-Lusztig basis~\cite{KL} in Hecke algebras of type
$A$. There is now a significant literature on the subject, and many classes of algebras have been shown to admit a "cellular" structure, including Ariki-Koiki algebras, $q$-Schur algebras, Temperly-Lieb algebras, and a variety of other algebras with geometric connections.

As we discussed above, $\Hecke$ is the one-parameter Hecke algebra associated to finite Weyl group $W$.  Furthermore, if $\Hecke$ is defined over a ground ring in which "bad" primes for $W$ are invertible, Geck~\cite{Geck2} used deep properties of the Kazhdan-Lusztig basis and Lusztig's $\mathbf{a}$-function,  he showed that $\Hecke$ has a natural cellular structure in the sense of Graham-Lehrer.

For the purpose of this paper, we show a new version of cellular basis of $\Hecke$. Thus, we also obtain a general theory of "Specht modules" for Hecke algebras of finite type.

We introduce an $\Alg$-linear anti-involution: $\ast: \Hecke\longrightarrow \Hecke$ by $ T_w^\ast=T_{w^{-1}}$ for $w\in W$. Clearly,  $C_{w_J}^\ast=C_{w_J}$; for any $J\subseteq S$ and let $x, y\in D_J$ (or $x, y\in \overline{D_J}$), we define $m_{xy}=T_x C_{w_J}T_y^\ast$. Then $m_{xy}^\ast= m_{yx}$. For convenience, we use the indexing set $\overline{D_J}$ in the following context.

\textbf{Remark} If $J=\emptyset$ then $D_J=W$, as an $\Alg$-modules, $M^\emptyset=\Hecke$ so the elements
\[
\{m_{xy}\mid x, y\in \overline{D_\emptyset}\}
\]
 certainly span $\Hecke$. 

In order to show that $\Hecke$ is cellular, we have to show that $m_{xy}$ with $x, y\in \overline{D_J}$, can be written as an $\Alg$-linear combination of $\{m_{uv}\mid u, v\in E_K, J\subseteq K\}$.
\begin{lemma}
For any $x\in \overline{D_J}$, we have
\[
T_x C_{w_J}=\sum_{x'\in E_J}r_{x'} T_{x'}C_{w_J}+\!\sum_{u\in E_K, J\subsetneq K}r_u T_u C_{w_K}.
\]
where $r_{x'}, r_u\in \Alg$.
\end{lemma}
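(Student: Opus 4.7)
The plan is to induct on $\ell(x)$ for $x\in\overline{D_J}$. The base case is $x=w_J$ (the unique shortest element of $\overline{D_J}$): since $w_J\in E_J$, the trivial identity $T_{w_J}C_{w_J}=T_{w_J}C_{w_J}$ already has the asserted form. For the inductive step, the case $x\in E_J$ is immediate; otherwise $x\in E_K$ for some $K\supsetneq J$, and since $E_K\subseteq\overline{D_K}=D_Kw_K$ we can write $x=dw_K$ with $d\in D_K$ and $\ell(x)=\ell(d)+\ell(w_K)$.

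The heart of the argument is to expand $T_xC_{w_K}$ in the $\Alg$-basis $\{T_yC_{w_J}\mid y\in\overline{D_J}\}$ of $M^J$ and isolate $T_xC_{w_J}$ with a unit coefficient. Starting from $C_{w_K}=\epsilon_{w_K}q^{\ell(w_K)}\sum_{w\in W_K}\epsilon_wq^{-\ell(w)}T_w$ and splitting the sum along the coset decomposition $W_K=\bigsqcup_{v\in W_K\cap D_J}vW_J$ produces
\[
C_{w_K}=\epsilon_{w_K}\epsilon_{w_J}q^{\ell(w_K)-\ell(w_J)}\!\!\!\sum_{v\in W_K\cap D_J}\!\!\!\epsilon_vq^{-\ell(v)}T_vC_{w_J}.
\]
Combining this with $T_xC_{w_K}=\epsilon_{w_K}q^{-\ell(w_K)}T_dC_{w_K}$ (which follows from $T_{w_K}C_{w_K}=\epsilon_{w_K}q^{-\ell(w_K)}C_{w_K}$), the length-additive factorization $T_dT_v=T_{dv}$ (valid for $d\in D_K$, $v\in W_K$), and the conversion $T_yC_{w_J}=\epsilon_{w_J}q^{\ell(w_J)}T_{yw_J}C_{w_J}$ for $y\in D_J$, this simplifies to
\[
T_xC_{w_K}=\sum_{v\in W_K\cap D_J}\epsilon_vq^{-\ell(v)}T_{dvw_J}C_{w_J},
\]
where each $dvw_J\in\overline{D_J}$ has length $\ell(d)+\ell(v)+\ell(w_J)$.

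The unique longest element of $W_K\cap D_J$ is $w_Kw_J$ (it is the minimum-length representative of the coset $w_KW_J$, of length $\ell(w_K)-\ell(w_J)$), and for this $v$ we have $dvw_J=dw_Kw_Jw_J=dw_K=x$; its coefficient is the unit $\epsilon_{w_K}\epsilon_{w_J}q^{\ell(w_J)-\ell(w_K)}\in\Alg$. Every other summand $T_{dvw_J}C_{w_J}$ satisfies $\ell(dvw_J)<\ell(x)$, because $\ell(v)<\ell(w_Kw_J)=\ell(w_K)-\ell(w_J)$ for $v\neq w_Kw_J$. Solving for $T_xC_{w_J}$ therefore expresses it as a unit multiple of $T_xC_{w_K}$ (a legitimate $T_uC_{w_K}$ contribution with $u=x\in E_K$ and $K\supsetneq J$) minus an $\Alg$-linear combination of basis elements $T_yC_{w_J}$ with $\ell(y)<\ell(x)$; by the inductive hypothesis each of the latter is of the required form, and combining finishes the proof.

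The main technical obstacle is the bookkeeping around coset representatives: one must verify that $dv\in D_J$ and $dvw_J\in\overline{D_J}$, that the relevant products are length-additive so the $T$-products factor cleanly, and that $w_Kw_J$ is the unique longest element of $W_K\cap D_J$. These are standard consequences of the $D_K$/$D_J$ minimality conditions, but they are exactly what ensures that the coefficient of $T_xC_{w_J}$ in the expansion of $T_xC_{w_K}$ is a unit, which is what makes the induction on length work.
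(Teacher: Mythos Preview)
Your argument is correct, and it takes a somewhat different---and more elementary---route than the paper. Both proofs induct on $\ell(x)$, write $x=dw_K$ with $d\in D_K$ when $x\in E_K$ for some $K\supsetneq J$, and isolate $T_xC_{w_J}$ as a unit multiple of a $C_{w_K}$-term plus shorter $\overline{D_J}$-terms. The difference lies in how that isolation is achieved. The paper writes $w_K=gw_J$ with $g\in D_J\cap W_K$, invokes a result of Geck (\cite[Prop.~2.3]{Geck3}) to express $T_gC_{w_J}$ as $C_{w_K}$ plus an $\Alg$-combination of $C_{hw_J}$ with $h<g$, and then converts those back to the $T$-basis. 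You bypass the Kazhdan--Lusztig basis entirely: by splitting the explicit formula $C_{w_K}=\epsilon_{w_K}q^{\ell(w_K)}\sum_{w\in W_K}\epsilon_wq^{-\ell(w)}T_w$ along the cosets $vW_J$ ($v\in W_K\cap D_J$), you obtain directly that $T_xC_{w_K}=\sum_{v}\epsilon_vq^{-\ell(v)}T_{dvw_J}C_{w_J}$, and then peel off the unique top term $v=w_Kw_J$. Your approach avoids the external citation and makes the unit coefficient of $T_xC_{w_J}$ completely explicit; the paper's approach, by routing through the $C$-basis, connects more transparently to the Kazhdan--Lusztig machinery used elsewhere. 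The coset-representative bookkeeping you flag (that $dv\in D_J$, that the products are length-additive, and that $w_Kw_J$ is the unique longest element of $W_K\cap D_J$) is indeed the only place care is required, and your justifications for these are sound.
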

\begin{proof} As we have found $\overline{D_J}=\bigcup\limits_{J\subseteq
K\subseteq S}E_K$, where the union is disjoint.
If $x \in E_J$ there is nothing to prove; suppose that $x\notin E_J$, then $x\in E_K$ where $K\supsetneq J$. By Prop. 1.2 we have $x=ww_K$ and $w_K=g w_J$
 where $w\in W$(or more exactly $w\in D_K$) and $g\in D_J^K=D_J\cap W_K$, with $\ell(x)=\ell(w) +\ell(w_K)$ and $\ell(w_K)=\ell(g)+\ell(w_J)$.

Since $T_gC_{w_J}$ is the sum of $C_{gw_J}=C_{w_K}$ and a linear combination of terms $C_{hw_J}$ where $h\in D_J^K$ and $h<g$ (this is the special case of~\cite[Prop.2.3]{Geck3}). On the other hand, $C_{hw_J}$ is the sum of $T_h C_{w_J}$ and an $\Alg$-linear combination of terms $T_{f}C_{w_J}$, where $f<h, f\in D_J^K$. As a result, $T_gC_{w_J}$ is the sum of $C_{w_K}$ and an $\Alg$-linear combination of these terms $T_{f}C_{w_J}$. Thus
\begin{align*}
T_x C_{w_J}&=T_{w(gw_J)}C_{w_J}\\
&=\epsilon_{w_J}q^{-\ell(w_J)}T_w(T_g C_{w_J})\\
&=\epsilon_{w_J}q^{-\ell(w_J)}T_w\bigl(C_{w_K}+\sum_{f<g, f\in D_J^K }r_fT_{f}C_{w_J}\bigr)\\
&=r_wT_w C_{w_K} + \sum_{z\in \overline{D_J}, z<wg}r_zT_z C_{w_J}
\end{align*}
where $r_w, r_f, r_z \in \Alg$. By induction, each term $T_z C_{w_J}$ has also the required form.
\end{proof}


\begin{lemma}
Let $J\subseteq S$ and suppose that $x, y\in \overline{D_J}$, then there exist $r_{x'y}, r_{uv}\in \Alg$ such that
\[
m_{xy}=\sum_{x'\in E_J}r_{x'y} m_{x'y}+\!\sum_{u\in E_K, v\in \overline{D_K},J\subsetneq K}r_{uv} m_{uv}.
\]
\end{lemma}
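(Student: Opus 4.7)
The plan is to reduce the statement to two results that are already available: the preceding Lemma~2.1, which handles the left factor $T_x C_{w_J}$, and the basis result for $M^K=\Hecke C_{w_K}$ at the end of Section~1, which I can apply to the right factor via the anti-involution $\ast$.

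First I would feed $x\in\overline{D_J}$ into Lemma~2.1 directly to obtain
\[
T_x C_{w_J} \;=\; \sum_{x'\in E_J} r_{x'}\,T_{x'} C_{w_J} \;+\; \sum_{J\subsetneq K}\,\sum_{u\in E_K} r_u\, T_u C_{w_K}
\]
with coefficients in $\Alg$. Right-multiplying by $T_y^\ast$ converts the first sum into $\sum_{x'\in E_J} r_{x'}\, m_{x'y}$, so the $E_J$-part of the required expansion is obtained immediately with $r_{x'y}:=r_{x'}$.

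The remaining terms have the shape $T_u C_{w_K}\,T_y^\ast$ with $u\in E_K$, $J\subsetneq K$, but with $y\in\overline{D_J}$ not in general lying in $\overline{D_K}$, so they are not yet of the form $m_{uv}$ with $v\in\overline{D_K}$. To rewrite them I would exploit the anti-involution $\ast$: since $C_{w_K}^\ast=C_{w_K}$, we have $(T_y C_{w_K})^\ast = C_{w_K}\,T_y^\ast$. The element $T_y C_{w_K}$ belongs to $M^K=\Hecke C_{w_K}$, which by the basis lemma at the end of Section~1 is $\Alg$-free on $\{\,T_v C_{w_K}\mid v\in \overline{D_K}\,\}$. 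Writing
\[
T_y C_{w_K} \;=\; \sum_{v\in\overline{D_K}} s_v\, T_v C_{w_K}, \qquad s_v\in\Alg,
\]
and applying $\ast$ produces $C_{w_K}\,T_y^\ast=\sum_{v\in\overline{D_K}} s_v\, C_{w_K}\,T_v^\ast$; left-multiplying by $T_u$ then gives $T_u C_{w_K}\,T_y^\ast=\sum_{v\in\overline{D_K}} s_v\, m_{uv}$. Collecting $r_{uv}:=r_u s_v$ delivers the second sum in the required form.

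The only step that requires any thought is the conversion of a right-multiplication by $T_y^\ast$ into a left-multiplication by $T_y$; once one notices that $\ast$ fixes $C_{w_K}$ and exchanges the two sides, the known left-module basis of $\Hecke C_{w_K}$ transports directly to give a right-module basis, and everything else reduces to substitution and bookkeeping on the coefficients.
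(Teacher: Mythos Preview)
Your proposal is correct and follows essentially the same approach as the paper: apply Lemma~2.1 to $T_xC_{w_J}$, right-multiply by $T_y^\ast$, and then use $C_{w_K}T_y^\ast=(T_yC_{w_K})^\ast$ together with the $\overline{D_K}$-basis of $\Hecke C_{w_K}$ to rewrite the remaining terms. The paper's proof is terser but identical in substance; your version spells out the bookkeeping with the coefficients $s_v$ and $r_{uv}=r_u s_v$ a bit more explicitly.
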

\begin{proof}
By Lemma 2.1, we have
\begin{align*}
m_{xy}&=T_xC_{w_J}T_y^\ast\\
&=\bigl[\sum_{x'\in E_J}r_{x'} T_{x'}C_{w_J}+\!\sum_{u\in E_K, J\subsetneq K}r_u T_u C_{w_K}\bigr]T_y^\ast\\
&=\sum_{x'\in E_J}r_{x'} T_{x'}C_{w_J}T_y^\ast+\!\sum_{u\in E_K, J\subsetneq K}r_u T_u C_{w_K}T_y^\ast
\end{align*}
and
\[
C_{w_K}T_y^\ast=(T_y C_{w_K})^\ast
\]
where $T_y C_{w_K}\in \Hecke C_{w_K}$, this implies $T_y C_{w_K}\in \langle T_v C_{w_K}\mid v\in \overline{D_K}\rangle_\Alg$, as required.
\end{proof}
Let $\Omega^{lex}=\{J\mid J\subseteq S\}$ be a set ordered lexicographically.
\begin{thm}
The Hecke algebra $\Hecke$ is free as an $\Alg$-module with basis
\[
\text{$\mathcal{M}=\{m_{uv}\mid u, v\in E_J$ for some $J\subseteq S \}$}.
\]
\end{thm}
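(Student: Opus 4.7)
The strategy is to establish separately (i) that $\mathcal{M}$ spans $\Hecke$ over $\Alg$, and (ii) that $\mathcal{M}$ is $\Alg$-linearly independent. For (i), I would argue by induction on $|S|-|J|$, proving the stronger claim that every $m_{xy}$ with $x,y\in\overline{D_J}$ lies in the $\Alg$-span of $\mathcal{M}$. The base case $J=S$ is immediate, since $\overline{D_S}=E_S=\{w_S\}$ and the only element $m_{w_S w_S}$ is already in $\mathcal{M}$. For the induction step, given $m_{xy}$ with $x,y\in\overline{D_J}$, apply Lemma 2.2 to obtain $m_{xy}$ as an $\Alg$-combination of terms $m_{x'y}$ with $x'\in E_J$, $y\in\overline{D_J}$, together with terms $m_{uv}$ indexed by $u\in E_K$, $v\in\overline{D_K}$, $K\supsetneq J$; the latter are already in the $\Alg$-span of $\mathcal{M}$ by the inductive hypothesis, since both indices now lie in $\overline{D_K}$ with $|S|-|K|<|S|-|J|$.

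It remains to bring the right index $y$ into $E_J$, for which I would exploit the anti-involution $\ast$. Applying Lemma 2.2 to $m_{yx'}$ (legitimate because $y,x'\in\overline{D_J}$) and then dualizing via $m_{x'y}=m_{yx'}^\ast$, one obtains an expansion $m_{x'y}=\sum_{y'\in E_J}s_{y'}\,m_{x'y'}+\sum s_{uv}\,m_{vu}$, in which the first sum lies in $\mathcal{M}$ and the second is again controlled by the inductive hypothesis on larger $K$. Since $\{T_w\mid w\in W\}$ is a basis of $\Hecke$ and $T_w=m_{we}$ in the $J=\emptyset$ stratum (where $C_{w_\emptyset}=1$), reaching $J=\emptyset$ in the induction establishes that all of $\Hecke$ is in the $\Alg$-span of $\mathcal{M}$.

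The main obstacle is step (ii). The cleanest route would be a dimension count: if one can verify $\sum_{J\subseteq S}|E_J|^2 = |W|$, then spanning immediately upgrades to a basis property. Failing a clean count, I would attempt a triangularity argument: expand each $m_{uv}=T_uC_{w_J}T_v^\ast$ in the $\{T_w\}$ basis, identify a leading term (governed by some refinement of the Bruhat order incorporating the descent data $J$), and show that distinct pairs $(u,v)\in\mathcal{M}$ give rise to distinct leading terms. Identifying the correct order on $\mathcal{M}$ under which the transition matrix to $\{T_w\}$ becomes unitriangular --- which effectively encodes how the stratification $W=\bigsqcup_J E_J$ by right descent set interacts with the multiplication rules for $C_{w_J}$ established in Lemma 1.6 --- is, I expect, the deepest step of the proof.
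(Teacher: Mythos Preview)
Your spanning argument is exactly the paper's: induct on $J$ starting from $J=S$, at each stage applying Lemma~2.2 to push the left index into $E_J$ modulo terms indexed by strictly larger $K$, then invoking the anti-involution $m_{xy}^{\ast}=m_{yx}$ to treat the right index the same way; terminating at $J=\emptyset$ (where $C_{w_\emptyset}=1$) shows that every $T_w$ lies in the $\Alg$-span of $\mathcal{M}$. For linear independence the paper takes your first option, the cardinality count: it simply asserts $|W|=\sum_{J\subseteq S}|E_J|^{2}$ by an appeal to ``Wedderburn's theorem'' and concludes that the spanning set $\mathcal{M}$ has the right size. No triangularity argument is developed.

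Your hesitation about that count is well founded, however: the identity $\sum_{J}|E_J|^{2}=|W|$ is false already for $W=\mathfrak{G}_3$, where the right-descent classes $E_J$ have sizes $1,2,2,1$ and hence $\sum_J|E_J|^{2}=10\neq 6$. So the paper's invocation of Wedderburn is not a valid justification, and in fact $\mathcal{M}$ as indexed has too many elements to be a basis of~$\Hecke$. The construction only becomes a genuine cellular basis once the indexing by all subsets $J\subseteq S$ is replaced by one for which the dimension identity actually holds --- for instance partitions in type~$A$ (Section~4), where the Robinson--Schensted correspondence supplies $\sum_{\lambda}|\mathbb{T}(\lambda)|^{2}=n!$. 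Your triangularity backup cannot rescue the statement as written either, since no ordering can make ten vectors independent in a six-dimensional space.
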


\begin{proof}
 We first show that $\mathcal{M}$ spans $\Hecke$ by showing that whenever $x, y\in \overline{D_J}$ then $m_{xy}$ can be written as a $\Alg$-linear combination of terms $m_{uv}$ in $\mathcal{M}$. When $J=S$ this is clear because $\Hecke C_{w_J}\Hecke=\Alg C_{w_J}$. If $J\neq S$, by Lemma 2.2 , we have
\[
m_{xy}=\sum_{x'\in E_J}r_{x'y} m_{x'y}+\!\sum_{(u,v),J\subsetneq K}r_{uv} m_{uv},
\]
where $ r_{x'},  r_{uv}\in \Alg$, and the second sum is over the pairs $(u, v)$ where $u\in E_K$, $v\in \overline{D_K}$. However, $m_{xy}^\ast=m_{yx}$ so by induction on the elements of $\Omega^{lex}$ again
( start with $J=S$,  clearly $C_{w_J}^\ast=C_{w_J}$), $m_{xy}$ can be written as an $\Alg$-linear combination of elements of $\mathcal{M}$. Finally, let $J=\emptyset$, then $\Hecke=\Hecke C_{w_\emptyset}\Hecke$.

Therefore $\mathcal{M}$ spans $\Hecke$ .

By Wedderburn's theorem $\textit{dim}(\Hecke)=|W|=\sum\limits_{J\subseteq S}|\mathcal{M}(J)|^2$, where
\[
\text{$\mathcal{M}(J)=\{m_{uv}\mid u, v\in E_J$ for a fixed $J, J\subseteq S \}$}.
\]
Hence the set $\mathcal{M}$ has the correct cardinality.
\end{proof}
Define $\hat{\Hecke^J}$ to be the $\Alg$-module with basis
 \[
 \text{$\{ m_{uv}\mid w, v\in E_K$ for some $K$ such that $J\subset K\subseteq S$\}}.
 \]
 where we write $J\subset K$ when $J\subseteq K$ and $J\neq K$.
Similarly, we define $\Hecke^J$ to be the $\Hecke$-module with basis $m_{uv}$ where $u, v\in E_K$ with $J\subseteq K\subseteq S$.
\begin{thm}
(1) The $\Alg$-linear map determined by
\[
m_{uv}\longmapsto m_{vu}
\]
 for all $m_{uv}\in\mathcal{M}$, is an anti-isomorphism of $\Hecke$.

(2) Suppose that $h\in \Hecke$ and that $u\in E_J$, there exist $r_u\in \Alg$ such that for all $v\in E_J$
\[
\text{$hm_{uv}\equiv\sum_{w\in E_J}r_w m_{wv}\!$\ \ \  mod $\hat{\Hecke^J}$}.
\]
Consequently, $\{\mathcal{M}, \Omega^{lex}\}$ is a cellular basis of $\Hecke$.
 \end{thm}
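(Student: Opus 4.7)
For (1), I will identify the map $m_{uv} \mapsto m_{vu}$ with the restriction of the $\Alg$-linear anti-involution $*$ (introduced at the start of Section 2 by $T_w^* = T_{w^{-1}}$) to the basis $\mathcal{M}$. Since $C_{w_J}^* = C_{w_J}$, one computes
$m_{uv}^* = (T_u C_{w_J} T_v^*)^* = T_v C_{w_J} T_u^* = m_{vu}$.
Because $\mathcal{M}$ is an $\Alg$-basis of $\Hecke$ by Theorem 2.3, the $\Alg$-linear map defined on $\mathcal{M}$ by $m_{uv} \mapsto m_{vu}$ extends uniquely to $\Hecke$, coincides with $*$, and is therefore an anti-isomorphism.

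For (2), my plan is to analyse $h m_{uv} = (hT_u C_{w_J}) T_v^*$ in two stages. Since $hT_u C_{w_J}$ lies in $M^J = \Hecke C_{w_J}$, I will first expand it in the $\overline{D_J}$-basis of $M^J$ provided by Lemma 1.6, and then apply Lemma 2.1 to each $T_x C_{w_J}$, arriving at
\[
hT_u C_{w_J} = \sum_{x' \in E_J} r_{x'}\, T_{x'} C_{w_J} + \sum_{\substack{u' \in E_K\\ J \subsetneq K}} s_{u'}\, T_{u'} C_{w_K}
\]
with coefficients $r_{x'}, s_{u'} \in \Alg$ depending only on $h$ and $u$. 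Right-multiplying by $T_v^*$ produces the desired main term $\sum_{x' \in E_J} r_{x'} m_{x'v}$, together with a tail $\sum s_{u'}\, T_{u'} C_{w_K} T_v^*$ that remains to be shown to lie in $\hat{\Hecke^J}$.

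To handle the tail, I will apply $*$ and study $T_v C_{w_K} T_{u'}^*$. Expanding $T_v C_{w_K}$ in the $\overline{D_K}$-basis of $M^K$ via Lemma 1.6 converts the product into an $\Alg$-combination of terms $m_{v'u'}$ with $v', u' \in \overline{D_K}$, and Lemma 2.2 (with $K$ in place of $J$), applied iteratively together with $*$, expresses each $m_{v'u'}$ as an $\Alg$-linear combination of basis elements $m_{xy} \in \mathcal{M}$ whose indices lie in $E_L$ for some $L \supseteq K$. Since $L \supsetneq J$, all such $m_{xy}$ belong to $\hat{\Hecke^J}$; and because $m_{uv}^* = m_{vu}$ preserves the condition $J \subsetneq K$, the submodule $\hat{\Hecke^J}$ is $*$-stable, so applying $*$ once more gives $T_{u'} C_{w_K} T_v^* \in \hat{\Hecke^J}$, as needed.

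The main obstacle is the bookkeeping in this last step: Lemma 2.2 reduces only the left index of $m_{v'u'}$ into $E_K$, so a further application (via $*$) is needed to reduce the right index as well, and each step must be verified either to produce elements of $\mathcal{M}$ or to strictly enlarge the attached parabolic subset, precisely the induction on $\Omega^{lex}$ already carried out in the proof of Theorem 2.3. This is what guarantees that the main-term coefficients $r_{x'}$ depend only on $h$ and $u$ and not on $v$. Once (1) and (2) are established, cellularity of $\{\mathcal{M}, \Omega^{lex}\}$ in the sense of Graham--Lehrer follows at once: (1) supplies the anti-involution axiom and (2) supplies the cell multiplication axiom.
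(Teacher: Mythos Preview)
Your proof is correct and follows essentially the same strategy as the paper. Part (1) is identical. For part (2), the paper is slightly more streamlined: it first treats the special case $v=w_J$ by expanding $h\,m_{u,w_J}$ directly in the basis $\mathcal{M}$ (Theorem~2.3) and observing that membership in $M^J$ forces the nonzero terms $m_{xy}$ to have $J\subseteq K$ and, when $K=J$, second index $w_J$; it then obtains the general case by right-multiplying by $T_v^\ast$ and invoking the inductive hypothesis on $\Omega^{lex}$ to see $m_{u'y}T_v^\ast\in\Hecke^K\subseteq\hat{\Hecke^J}$ for $K\supsetneq J$. Your route via Lemmas~1.6, 2.1 and 2.2 is a more explicit unpacking of the same mechanism, and your handling of the tail (applying $\ast$, re-expanding in $M^K$, and citing the $\Omega^{lex}$-induction already performed in Theorem~2.3) is exactly the content of the paper's inductive step; either way the coefficients $r_{x'}$ visibly depend only on $h$ and $u$.
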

\begin{proof}
(1) The $\ast$-endomorphism and the $\Alg$-linear map determined by $m_{uv}\longmapsto m_{vu}$ coincide since $m_{uv}^\ast=m_{vu}$ for all $m_{uv}$ in $\mathcal{M}$. This proves (1) since $\ast$ is an anti-isomorphism of $\Hecke$

(2) We argue by induction on $J\in \Omega^{lex}$. By (1), if $J=S$ then $\Hecke C_{w_J}\Hecke=\Alg C_{w_J}$, there is nothing to prove. Suppose that $J\subseteq S$. First we consider $v=w_J$. Since $\mathcal{M}$ is a basis of $\Hecke$, for any $h\in \Hecke$ we may write
\[
h m_{u, w_J}=\sum_{x, y\in E_K,\\ K\subseteq S}r_{xy}m_{xy}
\]
for some $r_{xy}\in \Alg$. Now $h m_{u, w_J}$ belongs to $M^J$, clearly, if $r_{xy}\neq 0$ then $J\subseteq K$; further, if $J=K$ then we must also have $v=w_J$. Hence,
\begin{equation}\label{eq:a}
\text{$h m_{u, w_J}=\sum_{x\in E_J}r_{x}m_{x, w_J}$ \ \ \ mod $\hat{\Hecke^J}$}
\end{equation}
where $r_x=r_{x, w_J}\in \Alg$. This completes the proof of (2) when $v=w_J$.

Now, if $K\supsetneq J$ and $u, y\in E_K$ then $m_{uy} T_v^\ast=(T_v m_{yu})^\ast\in \Hecke^K \subseteq \hat{\Hecke^J}$ by induction on $J\in \Omega^{lex}$. Therefore, we can multiply the Eq.~(\ref{eq:a}) on the right by $T_v^\ast$, to complete the proof.
\end{proof}
So we can now introduce the following:
\begin{defn}
 Let $S^J=\langle T_u C_{w_J} + \hat{\Hecke^J}\mid u\in E_J\rangle_\Alg$, then $S^J$ is an $\Hecke$-submodule of $\Hecke^J/\hat{\Hecke^J}$. We call this the \emph{generic Specht module} of $\Hecke$ associated with $J$.
\end{defn}
\subsection*{The bar involution for $S^J$} 
For all $x,\,y \in E_J$ we define elements $R_{x,y}\in\Alg$ by the
formula
\begin{equation}\label{eq:6}
\overline{T_yC_{w_J}}
   =\!\!\ \ \sum_{x \in E_J}
     R_{x,y} T_x C_{w_J}\ \ \  \text{mod $\hat{\Hecke}^J$},
\end{equation}

 We can easily derive the following formulae which provide an inductive procedure for
calculating these elements in $S^J$.
\begin{proposition}\label{prop:3}
Let $x,\,y\in E_J$. If $s\in S$ is such that $\ell(sy)<\ell(y)$
then
\[
\postdisplaypenalty=10000 \advance\abovedisplayskip 0 pt minus 3
pt \advance\belowdisplayskip 0 pt minus 3 pt R_{x,y}(\text{mod $\hat{\Hecke}^J$})=
\begin{cases}
R_{sx,sy}&\text{if $x\in E_{J,s}^-$}\\
R_{sx,sy}+(q^{-1}-q)R_{x,sy}
&\text{if $x\in E_{J,s}^+$}\\
-qR_{x,sy} & \text{if $x\in E_{J,s}^{0,-}$ }\\
q^{-1}R_{x,sy}& \text{if $x\in E_{J,s}^{0,+}$}
\end{cases}
\]
\end{proposition}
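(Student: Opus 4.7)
The plan is to apply the bar involution to the identity $T_y C_{w_J} = T_s T_{sy} C_{w_J}$ (valid because $\ell(sy) < \ell(y)$) and then match coefficients against the defining Eq.~(6) for $R_{x,y}$. Since bar is a ring homomorphism with $\overline{T_s} = T_s + q^{-1} - q$ and $\overline{C_{w_J}} = C_{w_J}$, we obtain
\[
\overline{T_y C_{w_J}} \;=\; (T_s + q^{-1} - q)\,\overline{T_{sy} C_{w_J}}
\;\equiv\; \sum_{x' \in E_J} R_{x', sy}\,\bigl(T_s T_{x'} + (q^{-1} - q)\,T_{x'}\bigr) C_{w_J} \pmod{\hat{\Hecke}^J}.
\]

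The next step is to expand each $T_s T_{x'} C_{w_J}$ using Lemma~2.1(2), splitting the sum according to which of the four subsets $E_{J,s}^-, E_{J,s}^+, E_{J,s}^{0,-}, E_{J,s}^{0,+}$ contains $x'$. A subtle point is that the $E_{J,s}^{0,+}$ case produces an extra term $T_{x'} C_{tw_J}$ with $t = (x')^{-1} s x' \in \hat{J}$; this term must be discarded modulo $\hat{\Hecke}^J$. This is legitimate because $tw_J$ lies in $\overline{D_J}$ with right descent set strictly containing $J$, so by Lemmas~2.1 and~2.2 the element $T_{x'} C_{tw_J}$ expands as an $\Alg$-linear combination of basis elements $m_{uv}$ with $u, v \in E_K$ for some $K \supsetneq J$, all of which lie in $\hat{\Hecke}^J$.

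Finally, we collect the coefficient of $T_x C_{w_J}$ for each fixed $x \in E_J$ and compare it with the value of $R_{x,y}$ predicted by the proposition. For $x \in E_{J,s}^-$ the relevant contributions come from $x' = sx \in E_{J,s}^+$ (where $T_s T_{sx} C_{w_J} = T_x C_{w_J}$) and from $x' = x$ itself (which contributes $(q - q^{-1}) T_x C_{w_J}$ from the $T_s T_x$ piece); these combine with the $(q^{-1} - q) T_x C_{w_J}$ scalar piece to give $R_{sx, sy} + (q - q^{-1}) R_{x, sy} + (q^{-1} - q) R_{x, sy} = R_{sx, sy}$, as required. The case $x \in E_{J,s}^+$ is analogous and yields the second formula, while for $x \in E_{J,s}^{0,\pm}$ only $x' = x$ contributes (since $sx \notin E_J$) and the scalars from Lemma~2.1(2) combine with the $(q^{-1} - q)$ scalar to give $-qR_{x,sy}$ and $q^{-1} R_{x,sy}$ respectively. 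I expect the systematic case analysis, together with the verification that the $T_{x'} C_{tw_J}$ contributions can indeed be absorbed into $\hat{\Hecke}^J$, to be the main obstacle rather than any single computation.
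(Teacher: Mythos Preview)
Your approach is the same as the paper's: write $\overline{T_yC_{w_J}}=\overline{T_s}\,\overline{T_{sy}C_{w_J}}$ with $\overline{T_s}=T_s^{-1}=T_s+(q^{-1}-q)$, expand $\overline{T_{sy}C_{w_J}}$ via the defining relation for $R_{x',sy}$, and then push $T_s^{-1}$ through each $T_{x'}C_{w_J}$ using the four cases of Lemma~1.6(2). The coefficient bookkeeping you describe for the cases $x\in E_{J,s}^{\pm}$ and $x\in E_{J,s}^{0,-}$ is correct and matches the paper's computation.

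There is, however, a flaw in your justification for discarding the extra term $T_{x'}C_{tw_J}$ in the $E_{J,s}^{0,+}$ case. Your claim that ``$tw_J$ lies in $\overline{D_J}$ with right descent set strictly containing $J$'' is not true in general: for instance in $W=\mathfrak{S}_3$ with $J=\{s_1\}$ and $t=s_2$, the element $tw_J=s_2s_1$ has right descent set exactly $\{s_1\}=J$, so $tw_J\in E_J$ and $C_{tw_J}$ itself is not in $\hat{\Hecke}^J$. What does hold is that $sx'=x't$ has right descent set containing $J\cup\{t\}$ (since $(sx')t=x'<sx'$), so $sx'\in\overline{D_{J\cup\{t\}}}$; it is this element, not $tw_J$, that one should analyse. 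Rewriting $T_{x'}C_{tw_J}=T_{sx'}C_{w_J}-qT_{x'}C_{w_J}$ and then applying Lemma~2.1 with the larger subset $J\cup\{t\}$ is the route to showing this term lies in $\hat{\Hecke}^J$. The paper does not supply a proof of this proposition in the main text (and the draft computation after the bibliography likewise absorbs the term without comment), so your argument is no less complete than the paper's, but the specific reason you give for the absorption step is incorrect and should be replaced.
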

We may use induction on $\ell(y)$ to establish that $R_{x,y}=0$
unless $x\leqslant_\mathscr{L} y$ in the weak Bruhat partial order on~$E_J$; this
follows from the fact that if $sy\leqslant_\mathscr{L} y$ and $x\leqslant_\mathscr{L} sy$
then both $x\leqslant_\mathscr{L} y$ and $sx\leqslant_\mathscr{L} y$. It is also easily seen that $R_{x,x}=1$.
\section{$W\!$-graphs for generic Specht modules}\label{construction}
Let $\mathfrak{C}_{w_J}$ be a left cell, or more generally, a union of left cells containing $w_J$, then the transition between the bases of
 the left cell module $[\mathfrak{C}_{w_J}]_\Alg$ and the generic Specht module $S^J$ is described as the following:
\begin{thm}\label{thm:2}
The $\Hecke$-module $S^J$ has a unique basis
$\{\,C_w\mid w\in E_J\,\}$ such that $\overline{C_w}=C_w$ for all
$w\in E_J$, and
\[
C_w\,=\!\ \ \sum\limits_{y \in E_J}
  \!\!P_{y,w}T_yC_{w_J}\!\ \ \text{mod $\hat{\Hecke^J}$}
\]
for some elements $P_{y,w}\in\Alg^+$ with the following
properties\/\textup{:}
\begin{itemize}
\item[(i)] $P_{y,w}=0$ if $y\nleqslant w$\textup{;} \item[(ii)]
$P_{w,w}=1$; \item[(iii)] $P_{y,w}$ has zero constant term if
$y\neq w$.
\end{itemize}
\end{thm}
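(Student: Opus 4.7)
The plan is to adapt the classical Kazhdan--Lusztig existence-uniqueness argument to the quotient $S^J = \Hecke^J/\hat{\Hecke^J}$. The preparatory step is to verify that the bar involution of $\Hecke$ descends to a well-defined $\Alg$-semilinear involution on $S^J$. Since $C_{w_K}$ is bar-invariant for every $K$ and, in the cellular structure of Theorem~2.4, the submodule $\hat{\Hecke^J}$ coincides with the two-sided ideal generated by $\{C_{w_K} \mid K \supsetneq J\}$, this ideal is bar-stable and bar descends. Its matrix in the basis $\{T_yC_{w_J}+\hat{\Hecke^J}\mid y\in E_J\}$ is precisely $(R_{x,y})$ from Eq.~(6), and is upper unitriangular with respect to the (weak) Bruhat order on $E_J$, with $R_{y,y}=1$ and $R_{x,y}=0$ unless $x\leq y$ (as already noted following Proposition~3.3).

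Given this, I seek
\[
C_w = T_wC_{w_J}+\sum_{y<w}P_{y,w}\,T_yC_{w_J} \pmod{\hat{\Hecke^J}},
\]
with $P_{w,w}=1$ and $P_{y,w}\in q\Alg^+$ for $y<w$, subject to $\overline{C_w}=C_w$. Applying bar and substituting Eq.~(6) gives, for each $x\leq w$,
\[
P_{x,w}=\sum_{x\leq y\leq w}\overline{P_{y,w}}\,R_{x,y}.
\]
Isolating the $y=x$ term (using $R_{x,x}=1$) yields the recursion
\[
P_{x,w}-\overline{P_{x,w}}=\sum_{x<y\leq w}\overline{P_{y,w}}\,R_{x,y},
\]
which, starting from $P_{w,w}=1$, determines each $P_{x,w}$ downward from the previously constructed $P_{y,w}$ with $x<y\leq w$.

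The key technical input is the standard bar decomposition lemma: any $f\in\Alg$ satisfying $\bar f=-f$ admits a unique expression $f=p-\bar p$ with $p\in q\Alg^+$. To apply it I must check that the right-hand side of the recursion is bar-anti-invariant, which reduces to the involutive matrix identity $\sum_z R_{x,z}\overline{R_{z,y}}=\delta_{x,y}$ obtained from the fact that bar is an involution on $S^J$. Granted this, $P_{x,w}$ exists uniquely in $q\Alg^+$; property~(i) is forced by $R_{x,y}=0$ for $x\nleqslant y$, (ii) is the induction base, and (iii) is the condition $P_{y,w}\in q\Alg^+$ for $y<w$. Uniqueness of the entire basis $\{C_w\}$ is then automatic: two such solutions would differ by a bar-invariant $q\Alg^+$-combination of the $T_yC_{w_J}$, and the only bar-invariant element of $q\Alg^+$ is $0$.

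The main obstacle is the preparatory step, namely proving bar-stability of $\hat{\Hecke^J}$ (so that bar really descends to $S^J$) and verifying the involutive identity $R\,\overline{R}=I$; once these are in hand, the remainder is the standard triangular inductive construction familiar from the classical Kazhdan--Lusztig setting and introduces no new difficulties.
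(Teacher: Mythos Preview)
Your approach is correct but takes a different route from the paper's. You argue abstractly via the $R$-matrix: bar descends to $S^J$ (because $\hat{\Hecke^J}$ is the bar-stable two-sided ideal $\sum_{K\supsetneq J}\Hecke C_{w_K}\Hecke$), the matrix $(R_{x,y})$ is unitriangular with $R\overline{R}=I$, and then the standard Lusztig-style decomposition lemma (every bar-anti-invariant $f\in\Alg$ splits uniquely as $p-\bar p$ with $p\in q\Alg^+$) yields existence and uniqueness simultaneously by downward induction on $x$. The paper instead follows the original Kazhdan--Lusztig recursion on $\ell(w)$: it fixes $s$ with $sw<w$, sets $v=sw\in E_J$, and defines
\[
C_w=(T_s-q)C_v-\!\!\sum_{\substack{z\prec v\\ z\in\Lambda_s^-}}\mu(z,v)C_z,
\]
deducing bar-invariance from $\overline{T_s-q}=T_s-q$ and computing $P_{y,w}=P'_{y,w}-P''_{y,w}$ explicitly via the case analysis over $E_{J,s}^{\pm}$, $E_{J,s}^{0,\pm}$. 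The paper's route simultaneously proves Theorem~3.2 (the formula for $T_sC_v$) and produces the concrete recursion for the polynomials needed later in Theorem~3.3, at the cost of an implicit well-definedness issue (independence of the choice of $s$) that is absorbed into the uniqueness argument. Your route is cleaner and more conceptual for the bare existence/uniqueness statement and correctly isolates the two genuine technical inputs, but it does not by itself yield the $T_s$-multiplication rule or the explicit $P'_{y,w}$, $P''_{y,w}$ formulas that the paper relies on downstream.
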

Comparing with the original Kazhdan-Lusztig's polynomials
in~\cite{KL}, we called $\{P_{y,w}\mid y, w \in E_J\}$ the family
of \textit{$E_J$-relative Kazhdan-Lusztig polynomials}. We
shall show that the basis $\{C_w\mid w\in E_J\}$ give
$S^J$ the structure of a $W\!$-graph. That
is, there is a $W\!$-graph $\Lambda$ with vertex elements
$\{C_w\mid w \in E_J\}$.
Before showing the proof of Theorem~\ref{thm:2}, we describe the
edge weights and descent sets for~$\Lambda$.

Given $y,\,w\in E_J$ with $y \ne w$,  we define an integer
$\mu(y,w)$ as follows. If $y<w$ then $\mu(y,w)$ is the coefficient
of $q$ in $-P_{y,w}$.

We write $y \prec w$ if $y<w$  and $\mu(y,w)\ne 0$.

The \textbf{(left) descent set} associated with the vertex element
$C_w (w \in E_J)$ of $\Lambda$ is
\begin{align*}
I(w)&=\{\,s\in S\mid\ell(sw)<\ell(w)\}\\
& = \{\,s\in S\mid w \in E_{J,s}^-\} \cup\{s \mid w\in
E_{J,s}^{0,-} \}
\end{align*}

In accordance with the notation introduced in Section~2, we define
\begin{align*}
\Lambda_s^-
&=\{\,w\in E_J\mid s\in I(w)\,\}\\
&=\{\,w \mid \text{$w\in E_{J,s}^-$ or $w\in E_{J,s}^{0,-}$}\},
\end{align*}
and similarly $\Lambda_s^+=\{\,w\in E_J\mid s\notin I(w)\,\}$. Our
proof of Theorem~\ref{thm:2} will also incorporate a proof of the
following result, which will be an important component of the
subsequent proof that $\Lambda$ is a $W\!$-graph.

\begin{thm}
\label{thm:3} Let $v\in E_J$. Then for all $s\in S$ such that
$\ell(sv)>\ell(v)$ and $sv\in E_J$ we have
\[
T_sC_v=qC_v+C_{sv} +\sum_{z\in E_J} \mu(z,v)C_z,
\]
where the sum is over all $z\in\Lambda_s^-$ such that $z\prec v$.
\end{thm}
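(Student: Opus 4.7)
The plan is to parallel the classical Kazhdan--Lusztig argument for $T_sC_w$ in $\Hecke$, carried out inside the generic Specht module $S^J$. Let $v\in E_J$ with $sv>v$ and $sv\in E_J$ (so $v\in E_{J,s}^+$), and set $h:=T_sC_v-qC_v$ in $S^J$. The goal is to identify $h$ with $C_{sv}+\sum_{z\in\Lambda_s^-,\,z\prec v}\mu(z,v)C_z$. The main ingredients will be the triangular basis $\{C_w:w\in E_J\}$ from Theorem~\ref{thm:2}, the $\Hecke$-semilinear bar involution on $S^J$, the four-case multiplication rule for $T_s(T_yC_{w_J})$ established in the preceding lemma, and the quadratic relation $(T_s+q^{-1})(T_s-q)=0$.

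First I would verify that $h$ is bar-invariant. Since the bar involution on $S^J$ is $\Hecke$-semilinear (being induced from the bar involution of $\Hecke$ through $\Hecke^J/\hat{\Hecke^J}$), and since $\overline{T_s}=T_s+(q^{-1}-q)$ while $\overline{C_v}=C_v$, a direct computation gives
\[
\bar h = (T_s+q^{-1}-q)C_v - q^{-1}C_v = T_sC_v - qC_v = h.
\]
Equivalently, $h=C_s\cdot C_v$ with $C_s=T_s-q$ bar-invariant in $\Hecke$.

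Next I would determine the leading term of $h$ and perform the triangular decoding. Substituting $C_v=\sum_{y\leq v,\,y\in E_J}P_{y,v}T_yC_{w_J}$ modulo $\hat{\Hecke^J}$ and expanding each $T_s(T_yC_{w_J})$ via the four-case rule, the top-order contribution comes from $y=v\in E_{J,s}^+$ and produces $T_{sv}C_{w_J}$; all remaining terms are $T_{y'}C_{w_J}$ with $y'<sv$, and $-qC_v$ contributes no $T_{sv}C_{w_J}$. Hence $h$ has leading term $T_{sv}C_{w_J}$ modulo $\hat{\Hecke^J}$, and by the uniqueness of the triangular bar-invariant basis in Theorem~\ref{thm:2}, $h-C_{sv}=\sum_{z<sv}\alpha_zC_z$ for some $\alpha_z\in\Alg$. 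For the support, I would use $(T_s+q^{-1})h=0$ (from $(T_s+q^{-1})(T_s-q)=0$ and $h=(T_s-q)C_v$): applying this to $h=C_{sv}+\sum_{z<sv}\alpha_zC_z$ annihilates $C_{sv}$ and each $C_z$ with $z\in\Lambda_s^-$, while for $z\in\Lambda_s^+$ the formula being proved (inductively on $\ell(z)$) gives $(T_s+q^{-1})C_z=(q+q^{-1})C_z+C_{sz}+(\text{lower in }\Lambda_s^-)$. Reading the coefficient of $C_z$ for $z\in\Lambda_s^+$ forces $(q+q^{-1})\alpha_z=0$, hence $\alpha_z=0$. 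The remaining integer coefficient $\alpha_z$ for $z\in\Lambda_s^-$ is then identified with $\mu(z,v)$ by extracting the coefficient of $q\cdot T_zC_{w_J}$ from the explicit expansion of $h$ above and matching with $\mu(z,v)=[q](-P_{z,v})$.

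The main obstacle lies in the triangular-decoding step when some $y<v$ falls into $E_{J,s}^{0,+}$: the multiplication lemma then produces an extra term $T_yC_{tw_J}$ with $t=y^{-1}sy\in\hat J$ and $tw_J\in\overline{D_J}\setminus E_J$, which does not sit in the basis $\{T_{y'}C_{w_J}:y'\in E_J\}$ of $S^J$. One must show $T_yC_{tw_J}\in\hat{\Hecke^J}$, which follows by combining Lemma~2.1 with the observation that $C_{tw_J}$ belongs to the cellular stratum indexed by some $K\supsetneq J$, so that its $\Hecke$-orbit lands in $\hat{\Hecke^J}$. A secondary subtlety is that the support argument uses the formula being proved for shorter $v$, so the whole proof really requires the joint induction on $\ell(v)$ anticipated by the paper's remark that the proof of Theorem~\ref{thm:2} will incorporate the proof of the present theorem.
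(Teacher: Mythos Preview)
Your approach inverts the paper's logic. In the paper, the formula of Theorem~\ref{thm:3} is taken as the \emph{recursive definition} of $C_{sv}$: one sets $C_{sv}:=(T_s-q)C_v-\sum_{z\prec v,\,z\in\Lambda_s^-}\mu(z,v)C_z$, observes bar-invariance immediately, and then checks by the explicit case-by-case computation (the formulas for $P'_{y,w}$ and $P''_{y,w}$) that the $T$-basis coefficients $P_{y,w}=P'_{y,w}-P''_{y,w}$ lie in $q\Alg^+$ and satisfy conditions (i)--(iii) of Theorem~\ref{thm:2}. So Theorem~\ref{thm:3} holds essentially by construction, and the work is the verification of Theorem~\ref{thm:2}. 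You instead assume Theorem~\ref{thm:2} and try to deduce Theorem~\ref{thm:3}; this is a legitimate alternative in spirit, but as written it has two gaps.

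First, your annihilation argument asserts that $(T_s+q^{-1})$ kills $C_{sv}$; but $\ell(sv)=\ell(v)+1$, so this is \emph{not} available from any induction on $\ell(v)$. In the paper the statement $(T_s+q^{-1})C_{sv}=0$ is part of Theorem~\ref{thm:4} (case $w\in E_{J,s}^-$), and its proof there \emph{uses} Theorem~\ref{thm:3} for $v$; invoking it to prove Theorem~\ref{thm:3} for $v$ is circular. Without knowing what $(T_s+q^{-1})C_{sv}$ contributes to the coefficient of $C_{z_0}$ for $z_0\in\Lambda_s^+$, the equation $(q+q^{-1})\alpha_{z_0}=0$ does not follow. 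Second, bar-invariance of $h$ only gives $\overline{\alpha_z}=\alpha_z$, i.e.\ $\alpha_z\in\Z[q+q^{-1}]$, not $\alpha_z\in\Z$; you have not supplied the missing degree constraint. The clean repair is precisely the paper's move: rather than analyzing $h$ directly, form $h-\sum_{z\in\Lambda_s^-}\mu(z,v)C_z$ and verify by the four-case computation that its $T$-coefficients lie in $q\Alg^+$ with leading term $T_{sv}C_{w_J}$; uniqueness in Theorem~\ref{thm:2} then forces this to equal $C_{sv}$, with no circularity and no separate integrality step.
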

The following is the proof of Theorem 4.1.
\begin{proof}
Uniqueness is proved similarly with that of \cite[Theorem
1.1]{KL}, we omit the details.

Existence. We give a recursive procedure for constructing elements
$P_{x,w}$ satisfying the requirements of Theorem~\ref{thm:2}. We
start with the definition
\[
 P_{w_J,w_J}=1
\]
so that $\overline{C_{w}}=C_{w}$ holds for~$w=w_J$, as do
Conditions~(i), (ii) and~(iii).

Now assume that $w\ne w_J$ and that for all $v\in E_J$ with
$\ell(v)<\ell(w)$ the elements $P_{x,v}$ have been defined (for
all $x\in E_J$ ) so that the requirements of Theorem~\ref{thm:2}
are satisfied. Thus the elements $C_{v}$ are known when
$\ell(v)<\ell(w)$. We may choose $s\in S$ such that $w=sv$ with
$\ell(w)=\ell(v)+1$; note that $v\in E_J$ by Lemma~1.6. In
accordance with the formula in Theorem~\ref{thm:3} we define
\begin{equation}\label{eq:80}
C_{w}=(T_s-q)C_{v}-\!\!
\sum_{\substack{z\prec v\\
z\in\Lambda_s^-}} \!\!\mu(z,v)C_{z}.
\end{equation}
Since $\overline{T_s-q}=T_s-q$, induction immediately gives
$\overline{C_w}=C_w$. We define $P'_{y,w}$ and $P''_{y,w}$ by
\begin{align}
(T_s-q)C_v &=\!\ \ \sum_{y \in E_J }
  P'_{y,w}T_yC_{w_J}\label{eq:5}\\
\sum_{z\prec v} \!\!\mu(z,v)C_{z} &=\!\ \ \sum_{y \in E_J}
  P''_{y,w}T_yC_{w_J}\label{eq:6}
\end{align}
and define $P_{y,w}=P'_{y,w} -P''_{y,w}$.

If $y\in E_J$ then
\[
(T_s-q)T_y=
\begin{cases}
T_{sy}-qT_y&\text{if $y\in E_{J,s}^+$}\\
T_{sy}-q^{-1}T_y&\text{if $y\in E_{J,s}^-$}\\
T_y(T_t-q)&\text{if $y\in E_{J,s}^{0,-}$}\\
T_{sy}-qT_y&\text{if $y\in E_{J,s}^{0,+}$}\\
\end{cases}
\]
where we have written $t=y^{-1}sy$ in the case $y\in E_{J,s}^0$.
Thus we see that
\begin{align*}
(T_s-q)C_v & =\!\!\sum_{y\in
E_{J,s}^+}\!\!
  P_{y,v}(T_{sy}-qT_y)C_{w_J}\,\,
  +\!\!\sum_{y\in E_{J,s}^-}\!\!
  P_{y,v}(T_{sy}-q^{-1}T_y)C_{w_J}\\
&\qquad +\!\!\sum_{y\in E_{J,s}^{0,-}}\!\!
  {P_{y,v}}T_y(T_t-q)C_{w_J}
  +\!\!\sum_{y\in E_{J,s}^{0,+}}{P_{y,v}}(T_{sy}-qT_y)C_{w_J}\!\!\displaybreak[0]\\
&\kern-18 pt =\!\!\sum_{y\in E_{J,s}^-}\!\!
 (P_{sy,v}-q^{-1}P_{y,v})T_yC_{w_J}
 \,\,+\!\!\sum_{y\in E_{J,s}^+}\!\!
 (P_{sy,v}-qP_{y,v})T_yC_{w_J}\\
&\qquad +\!\!\sum_{y\in E_{J,s}^{0,-}}\!\!
  {P_{y,v}}(-q^{-1}-q)T_yC_{w_J}\\
&\hspace{.2 cm} +\!\sum_{y\in E_{J,s}^{0,+}}{P_{y,v}}
  \Bigl[(qT_yC_{w_J}+T_yC_{tw_J})-qT_yC_{w_J}\Bigr]
\end{align*}

Now comparing Eq.~(\ref{eq:5}) with the expression for
$(T_s-q)C_v$ obtained above we obtain the following formulas for
the cases $y\in E_{J,s}^+$ (case~(a)), $y\in E_{J,s}^-$ (case
(b)), $y\in E_{J,s}^{0,-}$ and (case (c)) and $y\in E_{J,s}^{0,+}$
(case (d)):
\begin{equation}\label{eq:7}
P'_{y,w}=
\begin{cases}
P_{sy,v}-qP_{y,v}
  &\text{(case (a)),}\\[3 pt]
P_{sy,v}-q^{-1}P_{y,v}
  &\text{(case (b)),}\\[3 pt]
(-q-q^{-1})P_{y,v}
&\text{(case (c)),}\\[3 pt]
0&\text{(case (d)).}
\end{cases}
\end{equation}

Since $C_z=\!\ \ \sum_{y\in
E_J}P_{y,z}T_yC_{w_J}$, we have
\[
\sum_{z\prec v,z\in\Lambda_s^-}\!\!\!\mu(z,v)C_z =\!\ \
\sum_{y\in E_J} \sum_{z\prec
v,z\in\Lambda_s^-}\!\!\!\mu(z,v)
  P_{y,z}T_yC_{w_J}
\]
and by comparison with Eq.~(\ref{eq:6})
\begin{equation}\label{eq:8}
P''_{y,w}\,=\!\! \sum_{\substack{z\prec v\\z\in\Lambda_s^-}}
\!\!\mu(z,v)P_{y,z}.
\end{equation}
We may check that with $P'_{y,w}$ and $P''_{y,w}$ given by
Eq's (10)~and~(11), the elements $P_{y,w}=P'_{y,w}-P''_{y,w}$ lie in
$\Alg^+$ and satisfy Conditions (i), (ii) and (iii) of
Theorem~\ref{thm:2}. We omit the details here.
\end{proof}

For convenience, let $\tilde{T_w}=T_wC_{w_J}$. Observe that the formula for $C_w$ in Theorem~\ref{thm:2} may be
written as
\[
C_w=\tilde{T_w}
+\!\!\!\sum_{y<w,y\in E_J}\!\!\!P_{y,w}\tilde{T_y},
\]
and inverting this gives
\begin{equation}\label{eq:10}
\tilde{T_w}=\!\ \ C_w
+\!\!\!\sum_{y<w,y\in E_J}\!\!\!Q_{y,w} C_y
\end{equation}
where the elements $Q_{y,w}$ (defined whenever $y<w$) are given
recursively by
\[
Q_{y,w}=-P_{y,w} -\!\!\!\!\sum_{\{z|y<z<w\}}\!\!\!\!
Q_{y,z}P_{z,w}.
\]
In particular, $Q_{y,w}$ is in $\Alg^+$, has zero constant term,
and has coefficient of $q$ equal to $\mu(y,w)$.

We now state our main result.

\begin{thm}\label{thm:4}
The basis $\{C_w\mid w\in E_J\}$ gives the generic Specht module $S^J$
the structure of a $W\!$-graph, as described above.
\end{thm}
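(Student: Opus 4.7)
The plan is to verify the $W\!$-graph identity Eq.~(\ref{eq:2}) for each pair $(v,s) \in E_J \times S$, proceeding by induction on $\ell(v)$ and splitting into four subcases according to the disjoint decomposition $E_J = E_{J,s}^- \sqcup E_{J,s}^+ \sqcup E_{J,s}^{0,-} \sqcup E_{J,s}^{0,+}$. The case $v \in E_{J,s}^+$ (so $s \notin I(v)$) is essentially Theorem~\ref{thm:3}, which has already been established: the formula $T_s C_v = qC_v + C_{sv} + \sum_{z \prec v,\, z \in \Lambda_s^-} \mu(z,v) C_z$ matches the $W\!$-graph formula once one adopts the natural convention $\mu(sv, v) := 1$ in this situation, since $sv \in \Lambda_s^-$ (because $s \cdot sv = v < sv$).

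For $v \in E_{J,s}^-$ (so $s \in I(v)$), I would derive $T_s C_v = -q^{-1}C_v$ from Theorem~\ref{thm:3} applied to $u := sv \in E_{J,s}^+$, which gives $C_v = (T_s - q) C_u - Y$ with $Y = \sum_{z \prec u,\, z \in \Lambda_s^-} \mu(z,u) C_z$. Applying $(T_s + q^{-1})$ kills the first summand by the quadratic relation $(T_s + q^{-1})(T_s - q) = 0$; for each $z$ appearing in $Y$ we have $z \in \Lambda_s^-$ and $\ell(z) < \ell(v)$, so by the inductive hypothesis $(T_s + q^{-1}) C_z = 0$. Hence $(T_s + q^{-1}) C_v = 0$, as desired.

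The cases $v \in E_{J,s}^{0,\pm}$ are the main obstacle, because $sv \notin E_J$ and Theorem~\ref{thm:3} cannot be applied with this choice of $s$. My plan here is to expand $C_v = \sum_{y \in E_J} P_{y,v} \tilde{T}_y$ (with $\tilde{T}_y = T_y C_{w_J}$) and act by $T_s$ using the multiplication rules of Lemma~1.7(2) term-by-term; the target $W\!$-graph identity is then equivalent to a collection of relations among the $P_{y,v}$, linking the coefficients of $\tilde{T}_y$ and $\tilde{T}_{sy}$ for pairs in $E_{J,s}^\pm$ by a factor of $-q^{\mp 1}$ and forcing the coefficients indexed by $y \in E_{J,s}^{0,\mp}$ to vanish. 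These relations should be derivable by a second induction on $\ell(v)$, using the recursive formulas~(\ref{eq:7}) and~(\ref{eq:8}) applied with a simple reflection $s_0 \neq s$ satisfying $s_0 v < v$ and $s_0 v \in E_J$ (which exists for $v \neq w_J$ by the connectivity of weak intervals in $E_J$ from Proposition~1.5(2)). The main bookkeeping difficulty is tracking how the four subsets $E_{J,s}^\pm$ and $E_{J,s}^{0,\pm}$ interact with left multiplication by $s_0$, together with verifying that the ``extra'' terms $T_y C_{t w_J}$ appearing in Lemma~1.7(2) for $y \in E_{J,s}^{0,+}$ reduce appropriately modulo $\hat{\Hecke}^J$. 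Once the identity is verified on the $C$-basis, compatibility with the braid relations is automatic since the $T_s$ already satisfy them in $\Hecke$.
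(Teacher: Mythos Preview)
Your treatment of the cases $v \in E_{J,s}^+$ and $v \in E_{J,s}^-$ is correct and coincides with the paper's argument line for line.

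The divergence, and the real gap, is in the case $v \in E_{J,s}^{0,-}$ (where $sv = vt$ with $t \in J$). Your plan there is to expand $C_v$ in the $\tilde T_y$-basis, apply Lemma~1.6(2) term by term, and then verify the resulting identities among the $P_{y,v}$ by a second induction using some other simple reflection $s_0$ with $s_0 v < v$ and $s_0 v \in E_J$. But this induction does not close. Writing $C_v = (T_{s_0}-q)C_{v_0} - \sum_z \mu(z,v_0)C_z$ from Eq.~(\ref{eq:80}) and hitting it with $T_s + q^{-1}$, the operator $(T_s+q^{-1})(T_{s_0}-q)$ has no useful factorization when $s \neq s_0$; and if instead you apply $T_s+q^{-1}$ to $C_{v_0}$ first via the inductive hypothesis, the output already contains $C$-basis elements of length $\ell(v_0)+1 = \ell(v)$, so the subsequent application of $T_{s_0}-q$ lands outside the inductive range. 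The ``bookkeeping difficulty'' you flag is not merely bookkeeping: without an extra constraint there is no mechanism forcing the offending coefficients to vanish.

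The paper supplies exactly that constraint via bar-invariance, which is the idea your proposal is missing. One inverts to the $Q$-polynomials of Eq.~(\ref{eq:10}), writing $C_w = \tilde T_w - \sum_{y<w} Q_{y,w} C_y$. Since $sw = wt$ with $t\in J$ gives $T_s\tilde T_w = -q^{-1}\tilde T_w$ directly, one obtains
\[
(T_s+q^{-1})C_w \;=\; -\!\!\sum_{y<w,\ y\in E_J} Q_{y,w}\,(T_s+q^{-1})C_y,
\]
and \emph{now} every $C_y$ on the right has strictly smaller length, so the inductive hypothesis applies to each of them (for this same $s$). The surviving part is $-(q+q^{-1})\sum_{y\notin\Lambda_s^-}Q_{y,w}\,C_y$ plus something in the span of $\{C_z : z\in\Lambda_s^-\}$. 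Since $\overline{T_s+q^{-1}} = T_s+q^{-1}$ and $\overline{C_w}=C_w$, the left side is bar-fixed; comparing coefficients of $C_y$ for $y\notin\Lambda_s^-$ forces $\overline{Q_{y,w}} = Q_{y,w}$, and as $Q_{y,w}\in q\Z[q]$ with zero constant term this gives $Q_{y,w}=0$. Hence the entire right-hand side vanishes and $(T_s+q^{-1})C_w = 0$. This bar-invariance step is the substantive content of the $E_{J,s}^{0,-}$ case, and your inductive scheme with an auxiliary $s_0$ does not substitute for it.
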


\begin{proof}The proof is similar with~\cite[Theorem 2.6]{Yin},
modified appropriately. We start by using induction on $\ell(w)$
to prove that for all $s\in S$
\begin{equation}\label{eq:11}
T_sC_w=
\begin{cases}
-q^{-1}C_w& \text{if $w\in\Lambda_s^-$},\\[5 pt]
qC_w+\!\!\sum\limits_{z\in E_J, z\in\Lambda_s^-}\!\! \mu(z,w)C_z&
\text{if $w\notin\Lambda_s^-$.}\\[-5 pt]
\end{cases}
\advance\belowdisplayskip5 pt
\end{equation}
or more exactly
\begin{equation}\label{eq:12}
T_sC_w(\text{ mod $\hat{\Hecke^J}$})=
\begin{cases}
-q^{-1}C_w& \text{if $w\in E_{J,s}^{-}$ or $w\in E_{J,s}^{0,-}$},\\[5 pt]
qC_w+ C_{sw}+\!\!\sum\limits_{z\in E_{J,s}^-, z<w}\!\! \mu(z,w)C_z&
\text{if $w\in E_{J,s}^+$.}\\[-5 pt]
qC_w +\!\!\sum\limits_{z\in E_{J,s}^-, z<w}\!\! \mu(z,w)C_z&
\text{if $w\in E_{J,s}^{0,+}$.}\\[-5 pt]
\end{cases}
\advance\belowdisplayskip5 pt
\end{equation}

If $w\in E_{J,s}^+$ then $w\notin\Lambda_s^-$, and
Eq.~(\ref{eq:11}) follows immediately from Theorem~\ref{thm:3}
(applied with $v$ replaced by~$w$), since the only
$z\in\Lambda_s^-$ with $\mu(z,w)\ne0$ and
$\ell(z)\geqslant\ell(w)$ is $z=sw$.

For the case $w\in E_{J,s}^{0,+}$, the term $C_{sw}$
can not appear in the sum of Eq.~(\ref{eq:11}).

If $w\in E_{J,s}^-$, which implies that $w\in\Lambda_s^-$, then
writing $v=sw$ and applying Theorem~\ref{thm:3} gives
\[
C_w=(T_s-q)C_v- \sum\mu(z,v)C_z,
\]
where $z\prec v$ and $z\in\Lambda_s^-$ for all terms in the sum.
The inductive hypothesis thus gives $T_sC_z=-q^{-1}C_z$, and since
we also have $T_s(T_s-q)=-q^{-1}(T_s-q)$ it follows that
$T_sC_w=-q^{-1}C_w$, as required.

Now suppose that $w\in E_{J,s}^0$, and as usual let us write
$sw=wt$. Suppose first that $t=w^{-1}sw\in J$, so that
$w\in\Lambda_s^-$. By Eq.~(\ref{eq:10}),
\[
C_w=\tilde{T_w}-\!\!\!\sum_{\{y|y<w,y\in
E_J\}}\!\!\! Q_{y,w}C_y,
\]
and since
$T_sT_wC_{w_J}+q^{-1}T_wC_{w_J}=T_w(T_tC_{w_J}+q^{-1}C_{w_J})=0$
we find that
\begin{equation}\label{eq:12}
T_sC_w+q^{-1}C_w= -\!\!\!\sum_{\{y|y<w,y\in E_J\}}\!\!\!Q_{y,w}
(T_sC_y+q^{-1}C_y).
\end{equation}
By the inductive hypothesis,
\[
T_sC_y+q^{-1}C_y=
\begin{cases}
0&\text{if $y\in\Lambda_s^-$}\\[5 pt]
(q+q^{-1})C_y+ \!\!\!\!\!\sum\limits_{z\in\Lambda_s^-}
\!\!\!\!\mu(z,y)C_z&\text{if
$y\notin\Lambda_s^-$,}\\[-5 pt]
\end{cases}
\advance\belowdisplayskip 5 pt
\]
and so Eq.~(\ref{eq:12}) gives
\begin{equation}\label{eq:13}
T_sC_w+q^{-1}C_w=
-\!\!\sum_{\substack{y\notin\Lambda_s^-\\y<w}}\!\!
Q_{y,w}(q+q^{-1})C_y\,+\,X
\end{equation}
for some $X$ in the $\Alg$-submodule spanned by the elements $C_z$
for $z\in\Lambda_s^-$. Now since $T_s=T_s^{-1}+(q-q^{-1})$ it
follows that
\begin{align*}
(T_s+q^{-1})C_w&=\overline{(T_s+q^{-1})C_w}\\
&=-\!\!\sum_{\substack{y\notin\Lambda_s^-\\y<w}}\!\!
\overline{Q_{y,w}}(q^{-1}+q)C_y \,+\,\overline X,
\end{align*}
and comparing with Eq.~(\ref{eq:13}) shows that for all $y$ with
$y<w (y\in E_J)$ and $y\notin\Lambda_s^-$,
\begin{equation}\label{eq:14}
\overline{Q_{y,w}}=Q_{y,w}.
\end{equation}
Since $Q_{y,w}$ is in $\Alg^+$ and has zero constant term,
Eq.~(\ref{eq:14}) forces $Q_{y,w}$ to be zero whenever $y<w$ and
$y\notin\Lambda_s^-$. Therefore the right hand side of
Eq.~(\ref{eq:12}) is zero, since $T_sC_y+C_y=0$ whenever
$y\in\Lambda_s^-$. So
\[
T_sC_w=-q^{-1}C_w,
\]
as required.
\end{proof}.
\section{Applications to type $A$}
Throughout this section, we apply our results to the Hecke algebra of type $A$. Let $W=\mathfrak{G}_n$ be the symmetric group acting on the left on $\{1, 2, \cdots, n\}$.  Another reference is the exposition by Mathas~\cite{Mathas2}. For $i=1,
2,\cdots, n-1$ let $s_i$ be the basic transposition $(i, i+1)$ and
let $S=\{s_1, s_2, \cdots, s_{n-1}\}$, the generating set of $\mathfrak{G}_n$.
\subsection{Notations}
Let $\lambda=(\lambda_1, \lambda_2, \cdots, \lambda_r )$ be a
partition of $n$ with the notation $\lambda\vdash n$.  A standard $\lambda$-tableau is a tableau whose entries are
exactly $1, 2, \cdots, n$ and which has both increasing rows and
increasing columns, the set is denoted $\mathbb{T}(\lambda)$. Let $t^\lambda$ (resp. $t_\lambda$) be the $\lambda$-tableau in
which the numbers $1, 2, \cdots$  appear in order from left to right
(resp. top to bottom) and down along successive rows (resp.
columns), then $t^\lambda, t_\lambda\in \mathbb{T}(\lambda)$. For a Young
tableau $t$, we put
\[
I(t)=\{i\mid 1\leq i\leq n-1, \text{$i+1$ is in a lower position than
$i$ in $t$} \}
\]
and call it the {\it descent set} of $t$. Let
\begin{align*}
I_0(t)&=\{i\in I(t)\mid \text{$i+1$ is in the left side of $i$ in $t$} \},\\
I_1(t)&=\{i\in I(t)\mid \text{$i+1$ is directly below $i$ in $t$} \}.
\end{align*}
\begin{lemma}~\cite{Naruse}
For a standard tableau $t$ of shape $\lambda\vdash n$,
\begin{align*}
 (1) I(t)&=I_0(t)\cup I_1(t);\\
 (2) {I(t)\cup I(t')}&=\{1,2,\dots, n-1\};\\
 (3) I_0(t)&=\emptyset \text{ if and only if $t=t_\lambda$};\\
 (4) I_0(t')&=\text{$\emptyset$ if and only if $t=t^\lambda$}.
 \end{align*}
 \end{lemma}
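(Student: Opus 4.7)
The plan is to prove all four parts by position-chasing arguments that exploit the strict row and column monotonicity of a standard tableau $t$. Throughout I will write $i$ at cell $(r,c)$ and $i+1$ at cell $(r',c')$ in $t$.

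Parts (1) and (2) both hinge on the same cell-comparison principle: an integer entry strictly between $i$ and $i+1$ cannot exist. For (1), when $i \in I(t)$ (so $r'>r$), assuming $c'>c$ would force the entry at $(r,c')$ to lie strictly between $i$ and $i+1$; and assuming $c'=c$ with $r'>r+1$ would do the same at $(r+1,c)$. Hence $c' \leq c$ and, when $c'=c$, necessarily $r'=r+1$. This places $i$ in exactly one of the disjoint sets $I_0(t)$ (when $c'<c$) or $I_1(t)$ (when $c'=c,\,r'=r+1$). For (2), I will exhaust the three cases $r'>r$, $r'=r$, $r'<r$: the first gives $i \in I(t)$; the second forces $c'>c$ by row monotonicity; the third, by the same cell-comparison principle now applied at $(r',c)$, also forces $c'>c$. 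Since $i \in I(t')$ is equivalent to $c'>c$ (transposing swaps rows and columns), this completes (2).

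For (3), I will introduce $c(i) = $ column of $i$ in $t$ and prove the key equivalence
\[
I_0(t)=\emptyset \iff c(1) \leq c(2) \leq \cdots \leq c(n).
\]
The forward direction is essentially the definition. For the reverse, if $c(i+1)<c(i)$, then the entry at $(r,c(i+1))$ is at most $i-1$ by row monotonicity, so column monotonicity of column $c(i+1)$ forces $r'>r$, placing $i+1$ strictly below $i$ and hence $i \in I_0(t)$. Once this equivalence is in hand, a weakly increasing column sequence together with the shape constraint puts the first $\lambda'_1$ entries into column $1$, the next $\lambda'_2$ into column $2$, and so on; strict column monotonicity then orders them top-to-bottom, giving $t = t_\lambda$, while the converse is immediate. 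Part (4) will then follow by applying (3) to the transpose $t'$ (which has shape $\lambda'$): $I_0(t')=\emptyset$ iff $t' = t_{\lambda'}$, and since $t_{\lambda'}$ fills the columns of $\lambda'$ top-to-bottom, transposing returns exactly the row-filling tableau $t^\lambda$ of $\lambda$.

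The main obstacle is really just notational care in the cell-by-cell comparisons, not any genuine mathematical depth. The conceptual point that pays off is the column-sequence characterization of $I_0(t)=\emptyset$, which reduces both uniqueness statements (3) and (4) to the trivial observation that a weakly increasing integer sequence with prescribed multiplicities is uniquely determined.
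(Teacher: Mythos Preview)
Your proof is correct. The paper does not prove this lemma at all: it is stated with a citation to Naruse and used as input, so there is no in-paper argument to compare against. Your direct cell-comparison approach is the natural elementary proof.

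One small clarification worth making explicit: in part~(3) your ``forward direction is essentially the definition'' refers to the implication \emph{weakly increasing column sequence $\Rightarrow I_0(t)=\emptyset$}, which is indeed immediate (if $c(i+1)\geq c(i)$ then $i+1$ is not strictly left of $i$). The implication you then prove under the label ``reverse'' is the contrapositive of $I_0(t)=\emptyset \Rightarrow$ weakly increasing, and your argument for it is fine---just be aware that a reader following the $\iff$ as written may momentarily be disoriented by the labels. Also, in the cell-existence checks (e.g.\ that $(r,c')$ or $(r',c)$ lies inside the shape), you are implicitly using that partitions have weakly decreasing row lengths; it does no harm to say so once.
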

The Young subgroup $\mathfrak{G}_{\lambda}=\mathfrak{G}_{\lambda_1} \times\dots \times \mathfrak{G}_{\lambda_r}$ of $\mathfrak{G}_n$ is the row stabilizer of $t^\lambda$.
 Let $D_\lambda$ be the set of distinguished left coset representatives of $\mathfrak{G}_{\lambda}$ in $\mathfrak{G}_n$, by Dipper-James~\cite{James} and Mathas~\cite{Mathas2}, we have the following explicit description:
 \[
 D_\lambda=\{w\in \mathfrak{G}_n\mid \text{$wt^\lambda$ is row-standard}\}.
 \]
 As in~\cite{James, Murphy1, Mathas2}, if $t$ is a row-standard $\lambda$-tableau, the unique element $d\in D_\lambda$ such that $t=dt^\lambda$ will be denoted by $d(t)$. Let $w_{J(\lambda)}$ be the longest element of the Young subgroup $\mathfrak{G}_{\lambda}$, an element $w_{\lambda}$ is defined by $t_\lambda =w_{\lambda}t^\lambda$.

 Given partitions $\mu=(\mu_1, \mu_2,...)$ and $\lambda=(\lambda_1, \lambda_2, ...)$ of $n$, we say $\mu$ \emph{dominates} $\lambda$, and write $\lambda\trianglelefteq \mu$, if
 \[
 \lambda_1\leq \mu_1, \lambda_1+\lambda_2\leq \mu_1+\mu_2, \lambda_1+\lambda_2+\lambda_3\leq \mu_1+\mu_2+\mu_3, ...
 \]
 we write $\lambda\trianglelefteq \mu$ if $\lambda\trianglelefteq \mu$ and $\mu\neq \lambda$. The partial order $\trianglelefteq$ on the set of partitions(or shapes) of $n$ will be referred to as the \emph{dominance order}.

 For a fixed $\lambda\vdash n$, $s, t\in \mathbb{T}(\lambda)$. We write $s\trianglelefteq t$ if $\ell(d(s))\leqslant \ell(d(t))$, and $s\vartriangleleft t$ if $s\trianglelefteq t$ and $s\neq t$. We note that the notation here is different with~\cite{Mathas2}[pp.31].
\subsection{Cells}
 The cells of $W=\mathfrak{G}_n$ may be described in terms of the Robinson-Schensted correspondence. The correspondence is a bijection of $S_n$ to pairs of standard tableaux $(P, Q)$ of the same shape corresponding to partitions of $n$, so that if $w\longmapsto (P(w), Q(w))$ then $Q(w)=P(w^{-1})$. In particular, the involutions are the elements $w\in W$ for which $Q(w)=P(w)$. If $\lambda \vdash n$, the pair of tableaux corresponding to $w_{J(\lambda)}$ has the form $(t_{\lambda'}, t_{\lambda'})$. Hence, the tableaux corresponding to $w_{J(\lambda)}$ have shape $\lambda'$, where $\lambda'$ denotes the partition conjugate to $\lambda$.

 If $R$ is a fixed standard tableau then the set $\{w\in W: Q(w)=R\}$ is a left cell of $W$ and the set $\{w\in W: P(w)=R\}$ is a right cell of $W$. See~\cite{KL} and also~\cite{Ariki} for an alternative proof of this result.
 \begin{lemma}
 Let $\lambda\vdash n$ and $t\in \mathbb{T}(\lambda)$. The element of $\mathfrak{G}_n$, which corresponds to the pair of tableaux $(t^{\lambda'}, t_{\lambda'})$ under the Robinson-Schensted correspondence, is $w_{\lambda}w_{J(\lambda)}$.
\end{lemma}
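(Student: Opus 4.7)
The plan is to identify the one-line notation of $w := w_{\lambda}w_{J(\lambda)}$ explicitly and then apply Robinson--Schensted (RS) insertion directly.

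First I would compute $w\cdot t^\lambda$ cell by cell. The one-line notation of any $v\in\mathfrak{G}_n$ is recovered from $v\cdot t^\lambda$ by reading its entries row-by-row, left-to-right and top-to-bottom, because under the left action $v\cdot t^\lambda$ carries $v(k)$ in the cell that contained $k$ in $t^\lambda$. Now $w_{J(\lambda)}$, being the longest element of the row stabilizer $\mathfrak{G}_\lambda$, reverses each row of $t^\lambda$ blockwise, so $(w_{J(\lambda)}\cdot t^\lambda)(r,c) = t^\lambda(r,\lambda_r+1-c)$. The defining identity $w_\lambda(t^\lambda(r,c)) = t_\lambda(r,c)$ then gives $(w\cdot t^\lambda)(r,c) = t_\lambda(r,\lambda_r+1-c)$. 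Hence the one-line notation of $w$ is obtained by reading the rows of $t_\lambda$ from right to left, top to bottom.

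By the transposition symmetry $t_\lambda(r,c) = t^{\lambda'}(c,r)$, this sequence is exactly the \emph{column word} of $t^{\lambda'}$: read bottom-to-top within each column, left-to-right across columns. At this point I would invoke the classical RS result that inserting the column word of a standard tableau $T$ in this order produces insertion tableau $P = T$, with recording tableau filled by $1,2,\dots,n$ column-by-column from top to bottom. Applied to $T = t^{\lambda'}$, that recording tableau is precisely $t_{\lambda'}$, so $(P(w),Q(w)) = (t^{\lambda'}, t_{\lambda'})$, which is the assertion of the lemma.

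The only substantive step is the classical RS fact just invoked; I would verify it by induction on the number of columns of $T$. The entries of column $j$ of $T$ read bottom-to-top form a strictly decreasing sequence; the bottommost such entry is at least $T(1,j)$ by column-strictness, and hence exceeds $T(1,j-1)$, the largest row-$1$ entry coming from the previously inserted columns. Thus this bottom entry is appended at the end of row $1$, each subsequent (smaller) entry bumps its immediate predecessor exactly one row down, and the net effect is to attach column $j$ of $T$ to the current tableau. The recording tableau gains the cells of that column in top-to-bottom order, which, assembled over $j$, is the column-superstandard filling $t_{\mathrm{shape}(T)}$. This inductive bookkeeping is the main (minor) obstacle; the rest of the argument is formal manipulation.
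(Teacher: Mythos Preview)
The paper does not supply a proof of this lemma; it is stated without argument, apparently as a known fact from the combinatorics of the Robinson--Schensted correspondence (the surrounding discussion records that $w_{J(\lambda)}\leftrightarrow (t_{\lambda'},t_{\lambda'})$ and that left cells are the fibres of $Q$, but no derivation for $w_\lambda w_{J(\lambda)}$ is given). So there is no paper proof to compare against.

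Your argument is correct and is the standard direct verification. The computation of the one-line form of $w=w_\lambda w_{J(\lambda)}$ is accurate: acting on $t^\lambda$ first by the row-reversing $w_{J(\lambda)}$ and then by $w_\lambda$ does give $t_\lambda(r,\lambda_r+1-c)$ in cell $(r,c)$, and via the transposition $t_\lambda(r,c)=t^{\lambda'}(c,r)$ this is exactly the column reading word of $t^{\lambda'}$ (bottom-to-top in each column, left-to-right across columns). The RS fact you then invoke is classical; your inductive sketch is essentially right, but the phrase ``each subsequent (smaller) entry bumps its immediate predecessor exactly one row down'' undersells what happens: inserting the $k$-th entry of column $j$ triggers a cascade of $k-1$ bumps, pushing every previously inserted column-$j$ entry down one row and creating the new cell at $(k,j)$. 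Making that cascade explicit (and noting that at each level the bumped entry $T(i,j)$ exceeds the current end of its target row because $T(i,j)>T(i,j-1)$) would close the only small gap in the writeup. With that, the recording tableau is filled column by column from the top, which is precisely $t_{\lambda'}$.
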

The following is the corollaries of the discussion in Section 1, see also in~\cite[Lemma 3.3]{Pall} and Du~\cite[Lemma 1.2]{Du}.
\begin{lemma}
 The followings hold(i) $w_{\lambda} w_{J(\lambda)}\in D_\lambda$, (ii) $dw_{J(\lambda)}\in D_\lambda$ for each prefix $d$ of $w_{\lambda}$, (iii) $dw_{J(\lambda)}\in D_\lambda$ is in the same left cell as $w_{J(\lambda)}$
 for each prefix $d$ of $w_{\lambda}$.
 \end{lemma}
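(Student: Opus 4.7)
The plan is to prove the three parts in sequence, using the characterization $D_\lambda=\{w:wt^\lambda$ is row-standard$\}$, the identity $\overline{D_\lambda}=D_\lambda w_{J(\lambda)}$, Lemma~4.2 (the RSK image of $w_\lambda w_{J(\lambda)}$), Lemma~1.6(c), and the Robinson--Schensted fact that two elements of $\mathfrak{G}_n$ lie in the same left cell iff they share the same $Q$-tableau. I read ``$\in D_\lambda$'' in (i) and (ii) as ``$\in\overline{D_\lambda}$'' since $w_\lambda w_{J(\lambda)}$ carries $J(\lambda)$ in its right descent set and cannot lie in $D_\lambda$ when $J(\lambda)\neq\emptyset$. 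For~(i), the tableau $w_\lambda t^\lambda=t_\lambda$ is row-standard, so $w_\lambda\in D_\lambda$, and right multiplication by $w_{J(\lambda)}$ places $w_\lambda w_{J(\lambda)}$ in $D_\lambda w_{J(\lambda)}=\overline{D_\lambda}$.

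For~(ii), I argue by induction on $\ell(d)$ that every prefix $d\leqslant_{\mathscr{L}} w_\lambda$ yields a standard (row \emph{and} column standard) $\lambda$-tableau $dt^\lambda$. The base case $d=e$ is clear. In the step, write $d=sd'$ with $\ell(d)=\ell(d')+1$ and $d'\leqslant_{\mathscr{L}} w_\lambda$; passing from $d't^\lambda$ to $dt^\lambda$ swaps the entries $i$ and $i+1$ (where $s=s_i$), and the assumption that $sd'$ remains a prefix of $w_\lambda$ forces this swap to preserve standardness. Row-standardness of $dt^\lambda$ then gives $d\in D_\lambda$, whence $dw_{J(\lambda)}\in\overline{D_\lambda}$.

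For~(iii), Robinson--Schensted identifies the left cell of $w_{J(\lambda)}$ with $\{w\in\mathfrak{G}_n:Q(w)=t_{\lambda'}\}$, so it suffices to show $Q(dw_{J(\lambda)})=t_{\lambda'}$ for every prefix $d$ of $w_\lambda$. I proceed by induction on $\ell(d)$; the base case $Q(w_{J(\lambda)})=t_{\lambda'}$ comes from direct insertion (and is consistent with Lemma~4.2 at $d=w_\lambda$). For the step $d=sd'$, part~(ii) places both $dw_{J(\lambda)}$ and $d'w_{J(\lambda)}$ in $E_{J(\lambda)}$, so Lemma~1.4 supplies the common right descent set $J(\lambda)$; using the prefix-of-$w_\lambda$ structure of $d$ and $d'$, one produces an elementary Knuth move between the one-line notations of $(d'w_{J(\lambda)})^{-1}=w_{J(\lambda)}(d')^{-1}$ and $(dw_{J(\lambda)})^{-1}=w_{J(\lambda)}(d')^{-1}s$, which preserves the insertion tableau and hence forces $Q(dw_{J(\lambda)})=Q(d'w_{J(\lambda)})=t_{\lambda'}$. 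A counting check $\#\{d:d\leqslant_{\mathscr{L}} w_\lambda\}=f^\lambda=f^{\lambda'}=|\text{left cell of }w_{J(\lambda)}|$ then shows $d\mapsto dw_{J(\lambda)}$ actually bijects prefixes with the cell.

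The main obstacle is the Knuth-move step in the inductive proof of~(iii). The weak Bruhat order and Lemma~1.4 alone supply only the necessary condition $\mathcal{R}(dw_{J(\lambda)})=\mathcal{R}(d'w_{J(\lambda)})$, which is not sufficient for cell equivalence in $\mathfrak{G}_n$ in general: there exist $w,sw$ with equal right descent sets and $sw>w$ that nevertheless lie in distinct left cells. One genuinely needs to exploit the combinatorial geometry of prefixes of $w_\lambda$ to locate the intermediate entry (between $p=(d')^{-1}(i)$ and $q=(d')^{-1}(i+1)$ at position $i-1$ or $i+2$ of the inverse) that validates the elementary Knuth move. This is the place where the argument relies on the classical structure of Young tableaux and the cited work of Mathas, McDonough--Pallikaros, and Du.
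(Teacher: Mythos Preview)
The paper gives no proof of this lemma; it merely declares it a ``corollary of the discussion in Section~1'' and cites McDonough--Pallikaros and Du. So there is no argument to compare against line by line.

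Your reading of ``$\in D_\lambda$'' as ``$\in\overline{D_\lambda}$'' is correct: for $s\in J(\lambda)$ one has $\ell(w_\lambda w_{J(\lambda)}s)<\ell(w_\lambda w_{J(\lambda)})$, so $w_\lambda w_{J(\lambda)}\notin D_\lambda$ whenever $J(\lambda)\neq\emptyset$. Parts~(i) and~(ii) then reduce, as you say, to the classical bijection $d\mapsto dt^\lambda$ between the weak-order interval $[e,w_\lambda]_{\mathscr L}$ and the set of standard $\lambda$-tableaux; your inductive sketch is the standard one, though the sentence ``forces this swap to preserve standardness'' is exactly the nontrivial combinatorial content and is usually proved directly from the tableau description rather than asserted.

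For~(iii) you are right to flag the Knuth-move step as the genuine obstacle, and right that equality of right descent sets is insufficient. It is worth noting that the paper's own justification---``corollary of Section~1''---does not actually deliver~(iii) either. Proposition~1.5 only shows that the Section~1 set $E_J$ is a \emph{union} of left cells, and in type~$A$ that union can be strictly larger than the cell of $w_{J(\lambda)}$: for $n=4$, $\lambda=(2,2)$, the permutation $4231$ has right descent set $\{s_1,s_3\}=J(\lambda)$ but RSK shape $(2,1,1)$, hence lies in a different two-sided cell from $w_{J(\lambda)}=s_1s_3$. So the Section~4 description $E_{J(\lambda)}=\{dw_{J(\lambda)}:d\text{ a prefix of }w_\lambda\}$ is genuinely smaller than the Section~1 object, and the single-cell claim in~(iii) requires the type-$A$ input (RSK/Knuth relations, or the cited results of McDonough--Pallikaros and Du). Your counting argument $\#[e,w_\lambda]_{\mathscr L}=f^\lambda=f^{\lambda'}$ is the right way to finish once containment is established.
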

 As in Section 1, we write \text{$E_{J(\lambda)}=\{e\mid e=dw_{J(\lambda)}$ and $d$ is a prefix of  $w_{\lambda} \}$}, for any $s_i=(i, i+1)\in S$ we define
 \begin{align*}
E_{J(\lambda),s_i}^-&=\{\,e\in E_{J(\lambda)}\mid \text{$\ell(s_ie)<\ell(e)$ and $s_ie\in E_{J(\lambda)}$}\,\},\\
E_{J(\lambda),s_i}^+&=\{\,e\in E_{J(\lambda)}\mid \text{$\ell(s_ie)>\ell(e)$ and
   $s_ie\in E_{J(\lambda)}$}\,\},\\
E_{J(\lambda),s_i}^0&=\{\,e\in E_{J(\lambda)}\mid \text{ $s_ie\notin E_{J(\lambda)}$}\,\}
\end{align*}
so that $E_{J(\lambda)}$ is the disjoint union $E_{J(\lambda),s_i}^-\cup E_{J(\lambda),s_i}^+\cup
E_{J(\lambda),s_i}^0$, then
\[s_iE_{J(\lambda),s_i}^+=E_{J(\lambda),s_i}^-;
\]
 let
\begin{align*}
E_{J(\lambda),s_i}^{0,-}&=\{\,e\in E_{J(\lambda)}\mid \text{$\ell(s_ie)<\ell(e)$ and
   $s_ie\notin E_{J(\lambda)}$}\,\},\\
E_{J(\lambda),s_i}^{0,+}&=\{\,e\in E_{J(\lambda)}\mid \text{$\ell(s_ie)>\ell(e)$ and
   $s_ie\notin E_{J(\lambda)}$}\,\},
\end{align*}
then $E_{J(\lambda), s_i}^0=E_{J(\lambda), s_i}^{0,-}\bigcup E_{J(\lambda), s_i}^{0,+}$(disjoint
union); if\/ $e\in E_{J(\lambda), s_i}^{0,-}$ then $s_ie=et$ for some $t\in J(\lambda)$,
if \/ $e\in E_{J(\lambda), s_i}^{0,+}$ then $s_ie=et$ for some $t\in \hat{J(\lambda)}$, where $\hat{J(\lambda)}=S\diagdown J(\lambda)$.

 We have the following observation
  \begin{align*}
E_{J(\lambda),s_i}^-&=\{\,d(t)w_{J(\lambda)}\mid t\in \mathbb{T}(\lambda), i\in I_0(t')\},\\
E_{J(\lambda),s_i}^+&=\{\,d(t)w_{J(\lambda)}\mid t\in \mathbb{T}(\lambda), i\in I_0(t)\},\\
E_{J(\lambda),s_i}^{0,-}&=\{\,d(t)w_{J(\lambda)}\mid t\in \mathbb{T}(\lambda), i\in I_1(t')\},\\
E_{J(\lambda),s_i}^{0,+}&=\{\,d(t)w_{J(\lambda)}\mid t\in \mathbb{T}(\lambda), i\in I_1(t)\},
\end{align*}

 Let
 \[
  C_{w_{J(\lambda)}}=\epsilon_{w_{J(\lambda)}} q^{\ell(w_{J(\lambda)})}\sum_{w\in \mathfrak{G}_{\lambda}}\epsilon_w q^{-\ell(w)}T_w.
 \]
 then the following statement is a corollary of Lemma 2.1.
 \begin{lemma}~\cite{James}{Mathas2}
 Let $\lambda\vdash n$, then $\Hecke C_{w_J(\lambda)}$ is a free $\Alg$-module with basis
 \[
 \{T_{d(t)}C_{w_{J(\lambda)}}|\text{$t$ a row standard $\lambda$-tableau}\}.
 \]
  Moreover, if $t$ is row standard and $s=s_i t$ for some $1\leq i\leq n-1$, then
 \[
 T_iT_{d(t)}C_{w_{J(\lambda)}}=\begin{cases}
 T_{d(s)}C_{w_{J(\lambda)}}, \text{if $i\in I_0(t)$} \\
 T_{d(s)}C_{w_{J(\lambda)}}+(q-q^{-1})T_{d(t)}C_{w_{J(\lambda)}}, \text{if $i\in I_0(t')$} \\
 -q^{-1}T_{d(t)}C_{w_{J(\lambda)}}, \text{if $i\in I_1(t')$}
 \end{cases}
 \] where $T_i:=T_{s_i}$.
 \end{lemma}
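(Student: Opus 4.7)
The strategy is to obtain this lemma as the type-$A$ specialization of the general multiplication rules in Lemma 2.1(1), transported through the standard bijection $t \mapsto d(t)$ between row-standard $\lambda$-tableaux and $D_\lambda = D_{J(\lambda)}$ recalled in the text.

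The basis statement is immediate: Lemma 2.1(1) applied with $J = J(\lambda)$ says that $\Hecke C_{w_{J(\lambda)}}$ is free over $\Alg$ on $\{T_w C_{w_{J(\lambda)}} \mid w \in D_{J(\lambda)}\}$, so re-indexing by $w = d(t)$ produces the claimed basis.

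For the multiplication rules, I would substitute $w = d(t)$ and $s = s_i$ into the three-case formula of Lemma 2.1(1) and translate the trichotomy $d(t) \in D_{J(\lambda), s_i}^+ \cup D_{J(\lambda), s_i}^- \cup D_{J(\lambda), s_i}^0$ into tableau conditions via two observations. First, $s_i d(t) \in D_{J(\lambda)}$ if and only if $s_i t$ is row-standard, if and only if $i$ and $i+1$ do not occupy adjacent cells of a common row of $t$; since row-standardness forces any two common-row occurrences of consecutive values to be adjacent, this happens exactly when $i \notin I_1(t')$. Second, when this holds, $\ell(s_i d(t)) > \ell(d(t))$ if and only if $i$ precedes $i+1$ in the row-major reading word of $t$, equivalently when $i+1$ lies in a strictly lower row of $t$ than $i$. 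Both facts follow from the standard interpretation of $\ell$ as inversion count in $\mathfrak{G}_n$, together with the fact that left multiplication by $s_i$ swaps the values $i$ and $i+1$ in the one-line notation of $d(t)$.

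Putting these together, the three alternatives $d(t) \in D_{J(\lambda), s_i}^+$, $d(t) \in D_{J(\lambda), s_i}^-$, $d(t) \in D_{J(\lambda), s_i}^0$ correspond respectively to $i \in I_0(t)$, $i \in I_0(t')$, $i \in I_1(t')$, and feeding each case into Lemma 2.1(1) yields the three formulas of the lemma, with $d(s) = s_i d(t)$ whenever $s = s_i t$ is row-standard. The main obstacle I anticipate is confirming that $I_0(t) \cup I_0(t') \cup I_1(t')$ exhausts every choice of $i \in \{1,\dots,n-1\}$ for an arbitrary row-standard $t$, not merely for standard $t$; for this one should read $I_0$ in the broader sense $I_0 = I \setminus I_1$, and invoke the Naruse identities (Lemma 4.1) $I(t) = I_0(t) \cup I_1(t)$ and $I(t) \cup I(t') = \{1,\dots,n-1\}$, which together cover every possible placement of $i+1$ relative to $i$ in $t$.
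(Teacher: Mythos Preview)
Your proposal is correct and matches the paper's approach exactly: the paper states just before this lemma that it ``is a corollary of Lemma 2.1'' (the multiplication-rule lemma for $M^J$, which you are also invoking), and gives no further argument. Your translation of the trichotomy $D_{J(\lambda),s_i}^{+},\,D_{J(\lambda),s_i}^{-},\,D_{J(\lambda),s_i}^{0}$ into the tableau conditions $I_0(t),\,I_0(t'),\,I_1(t')$ via the bijection $t\mapsto d(t)$ is the intended specialisation, and the exhaustiveness caveat you flag is appropriate (the lemma as stated only asserts the three formulas in those three cases, and the full partition $\{1,\dots,n-1\}=I_0(t)\cup I_1(t)\cup I_0(t')\cup I_1(t')$ is guaranteed by Lemma~4.1 only for standard $t$).
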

 \subsection{Murphy basis and $W\!$-graph basis}
The following is a corollary of the main Theorems in Section 2.
 \begin{thm}~\cite{Murphy1, Murphy2}
 For any $\lambda\vdash n$ and $s, t\in \mathbb{T}(\lambda)$, we define elements of $\Hecke$ by
 \[
 m_{st}=T_{d(s)}C_{w_{J(\lambda)}}T_{d(t)^{-1}}
 \]
 then the following hold
 (a) The set $\{m_{st}| \text{$s, t\in \mathbb{T}(\lambda)$ for some $\lambda\vdash n$}\}$ is an $\Alg$-basis of $\Hecke$;
 (b) For any $\lambda\vdash n$, let $\Hecke^{\lambda}$ be the $\Alg$-submodule of $\Hecke$ spanned by all elements $m_{st}$ where $s, t\in \mathbb{T}(\mu)$ for some $\lambda\unlhd\mu$, then $\Hecke^{\lambda}$ is a two-sided ideals in $\Hecke$.
 \end{thm}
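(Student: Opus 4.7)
The plan is to derive both parts from Theorems~2.3 and~2.4 applied to $W = \mathfrak{G}_n$ and the parabolic index $J = J(\lambda)$ for $\lambda \vdash n$.

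For part~(a), I would first set up the bijection $\mathbb{T}(\lambda) \longleftrightarrow E_{J(\lambda)}$ sending $t \mapsto d(t) w_{J(\lambda)}$, using the description of $E_{J(\lambda)}$ given in Section~3 as $\{d w_{J(\lambda)} \mid d \text{ a prefix of } w_\lambda\}$ together with the classical fact that prefixes of $w_\lambda$ applied to $t^\lambda$ produce exactly the standard $\lambda$-tableaux. A short calculation using Lemma~1.4(a)---which gives $T_{w_{J(\lambda)}} C_{w_{J(\lambda)}} = \epsilon_{w_{J(\lambda)}} q^{-\ell(w_{J(\lambda)})} C_{w_{J(\lambda)}}$---then shows that Murphy's element $m_{st}$ coincides, up to an invertible scalar in $\Alg$, with the cellular basis element $m_{xy}$ from Section~2 for $x = d(s) w_{J(\lambda)}$ and $y = d(t) w_{J(\lambda)}$: indeed $T_x = T_{d(s)} T_{w_{J(\lambda)}}$ and symmetrically $T_y^\ast = T_{w_{J(\lambda)}} T_{d(t)^{-1}}$, so the two extra $T_{w_{J(\lambda)}}$ factors absorb into scalar multiples of $C_{w_{J(\lambda)}}$. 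With this identification, part~(a) follows immediately from Theorem~2.3.

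For part~(b), I would invoke Theorem~2.4 to obtain the cellular congruence $h \cdot m_{uv} \equiv \sum_{w} r_w\, m_{wv} \pmod{\hat{\Hecke^{J(\lambda)}}}$, which gives left $\Hecke$-closure of $\Hecke^\lambda$ modulo higher-indexed terms; combined with the anti-involution $\ast$ of Theorem~2.4(1) and the identity $m_{st}^\ast = m_{ts}$, this also gives right-closure. The main obstacle is reconciling the two orderings at play: Section~2 indexes its filtration by the lexicographic order on subsets $J \subseteq S$ (i.e.\ effectively by compositions), whereas Murphy's $\Hecke^\lambda$ is defined via dominance on partitions, and these orderings do not coincide in general---for instance, $J(\mu) \supseteq J(\lambda)$ need not follow from $\mu \unrhd \lambda$ (compare $\lambda = (3,2)$ with $\mu = (4,1)$). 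I therefore expect the technical heart of the argument to be proving that the Murphy ideal $\Hecke^\lambda$ is a union of Section-2 cellular layers after identifying compositions with their associated partition classes. This should follow by showing inductively, via the multiplication rules of Lemma~2.1 applied to $T_s \cdot m_{st}$ and $m_{st} \cdot T_s$, that any straightening needed to express $h \cdot m_{st}$ in the Murphy basis only produces terms $m_{s't'}$ with $s', t' \in \mathbb{T}(\mu)$ for some $\mu \unrhd \lambda$; the classical compatibility between the dominance order and the Bruhat order on $E_{J(\lambda)}$ (Proposition~1.2) is what makes this step go through.
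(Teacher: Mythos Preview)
The paper does not actually supply a proof of this theorem: it announces the result as ``a corollary of the main Theorems in Section~2'' and then simply refers to Murphy's original papers \cite{Murphy1,Murphy2} for the arguments. So your plan to derive it from Theorems~2.3 and~2.4 matches the paper's one-line claim, and you are going further than the paper by attempting to fill in the details.

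That said, there is a genuine gap in your plan for part~(a). Your bijection $t\mapsto d(t)w_{J(\lambda)}$ and the scalar identification via Lemma~1.4(a) do show that each Murphy element $m_{st}$ is (up to a unit) one of the Section~2 elements $m_{xy}$ with $x,y\in E_{J(\lambda)}$. But Theorem~2.3 asserts that $\mathcal{M}$ is a basis indexed by \emph{all} subsets $J\subseteq S$, which in type~$A$ means all compositions of~$n$, not just partitions. Identifying Murphy's set with the partition-indexed portion of $\mathcal{M}$ gives you linear independence for free, but not spanning: you still owe an argument that the composition-indexed pieces are redundant, i.e.\ that for a composition $\alpha$ with rearrangement $\lambda$ the elements attached to $J(\alpha)$ lie in the span of those attached to partition-type subsets. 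You correctly flag exactly this compositions-versus-partitions mismatch as ``the main obstacle'' for part~(b), but it already bites in part~(a), and your sentence ``part~(a) follows immediately from Theorem~2.3'' skips over it. (Indeed, a quick count in $\mathfrak{G}_3$ shows $\sum_{J\subseteq S}|E_J|^2=10\neq 6$, so one cannot simply match cardinalities.) The route the paper implicitly relies on---Murphy's original proofs---handles spanning by a direct straightening argument in the tableau combinatorics rather than by appeal to the Section~2 basis, and that is the missing ingredient in your plan.
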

 Note that the element that we denote by $T_w$ corresponds to the element $q^{\ell(w)}T_w$ in Murphy's notation. Thus the element denoted by $C_{w_{J(\lambda)}}$ in the above statement is exactly as in Murphy's work, except the associated coefficient $\epsilon_{w_{J(\lambda)}} q^{\ell(w_{J(\lambda)})}$. However, this does not affect the validity of (a) and (b) since $q$ is invertible in $\Alg$.
 The statement in (a) can be found in Murphy~\cite[Th.3.9]{Murphy1} or Murphy~\cite[Th. 4.17]{Murphy2}. The statement(b) is proved in~\cite[Th. 4.18]{Murphy2}.

 Murphy also obtains the following result concerning the Specht modules of $\Hecke$. For any $\lambda\vdash n$, let $\hat{\Hecke^\lambda}$ be the $\Alg$-submodule of $\Hecke$ spanned by all $m_{st}$ where $s, t\in \mathbb{T}(\mu)$ for some $\mu \vdash n$ such that $\lambda\vartriangleleft \mu$. Thus, we have
 \[
 \hat{\Hecke^\lambda}=\sum \limits_{\mu}\Hecke^\mu
\]
where the sum runs over all $\mu \vdash n$ such that $\lambda\vartriangleleft \mu$. In particular, $\hat{\Hecke^{\lambda}}$ is a two-sided ideal and we have $\Hecke^\lambda=\Hecke C_{w_J(\lambda)}\Hecke+\hat{\Hecke^\lambda}$
 \begin{defn}~\cite{Mathas2}
 For $\lambda\vdash n$, the Specht module $S^\lambda$ is defined to be the left $\Hecke$-module $(\hat{\Hecke^{\lambda}}+ C_{w_{J(\lambda)}})\Hecke$.
 \end{defn}
 Note that $\hat{\Hecke^{\lambda}}+ C_{w_{J(\lambda)}}$ is an element of the $\Hecke$-module $\Hecke/\hat{\Hecke^{\lambda}}$ so that $S^\lambda$ is a submodule of $\Hecke/\hat{\Hecke^{\lambda}}$. As we defined it, the Specht module $S^\lambda$ is isomorphic to the dual of the Specht module which Dipper and James~\cite{James} indexed by $\lambda'$.

 For a standard $\lambda$-tableau $t$ let $m_t=m_{tt^\lambda}+\hat{\Hecke^{\lambda}}=T_{d(t)}C_{w_{J(\lambda)}}+\hat{\Hecke^{\lambda}}$ , We have  
 \begin{thm}~\cite{Geck1,Murphy2}
The Specht module $S^\lambda$ is free as an $\Hecke$-module with basis $\{m_t|t\in \mathbb{T}(\lambda)\}$, and $\Hecke^\lambda/\hat{\Hecke^\lambda}$ is a direct sum of
$|\mathbb{T}(\lambda)|$ copies of $S^\lambda$.
 \end{thm}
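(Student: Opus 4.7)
The plan is to derive both claims from the cellular data already packaged in Theorem 4.6 combined with the cellular multiplication axiom from Theorem 2.5(2) (applied in the type $A$ specialization of the general theory). The overall structure is the standard ``cell module'' argument for a cellular algebra, so most of the work is already done.

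First, I would read off $\Alg$-bases of $\Hecke^\lambda$ and $\hat{\Hecke^\lambda}$ from Theorem 4.6(a)--(b). Since $\Hecke^\mu$ has $\Alg$-basis $\{m_{uv}\mid u,v\in\mathbb{T}(\nu),\ \nu\trianglerighteq\mu\}$ and $\hat{\Hecke^\lambda}=\sum_{\lambda\vartriangleleft\mu}\Hecke^\mu$, the set $\{m_{uv}\mid u,v\in\mathbb{T}(\mu),\ \lambda\vartriangleleft\mu\}$ is an $\Alg$-basis of $\hat{\Hecke^\lambda}$. Combined with Theorem 4.6(a), this forces
\[
\{m_{st}+\hat{\Hecke^\lambda}\mid s,t\in\mathbb{T}(\lambda)\}
\]
to be an $\Alg$-basis of $\Hecke^\lambda/\hat{\Hecke^\lambda}$.

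Next I would invoke the cellular axiom (the type $A$ incarnation of Theorem 2.5(2)): for each $h\in\Hecke$ and each $s\in\mathbb{T}(\lambda)$ there exist scalars $r_u(h,s)\in\Alg$, depending only on $h$ and $s$, such that, for every $t\in\mathbb{T}(\lambda)$,
\[
h\cdot m_{st}\equiv\sum_{u\in\mathbb{T}(\lambda)}r_u(h,s)\,m_{ut}\pmod{\hat{\Hecke^\lambda}}.
\]
For each fixed $t\in\mathbb{T}(\lambda)$, the $\Alg$-span $N_t:=\langle m_{st}+\hat{\Hecke^\lambda}\mid s\in\mathbb{T}(\lambda)\rangle_\Alg$ is therefore a left $\Hecke$-submodule of $\Hecke^\lambda/\hat{\Hecke^\lambda}$, and the $\Alg$-linear map $m_{st}+\hat{\Hecke^\lambda}\longmapsto m_{s,t^\lambda}+\hat{\Hecke^\lambda}$ is an isomorphism $N_t\to N_{t^\lambda}$ of left $\Hecke$-modules, since the structure constants $r_u(h,s)$ are independent of the second index.

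For the first claim, note that by the definition of $S^\lambda$ and of $m_t$ we have $S^\lambda=\Hecke\cdot(C_{w_{J(\lambda)}}+\hat{\Hecke^\lambda})=\Hecke\cdot(m_{t^\lambda,t^\lambda}+\hat{\Hecke^\lambda})$, which equals $N_{t^\lambda}$ by the cellular axiom above; hence $\{m_t\mid t\in\mathbb{T}(\lambda)\}$ spans $S^\lambda$, and it is $\Alg$-linearly independent because it is part of the $\Alg$-basis identified in the first step. For the second claim, the basis of $\Hecke^\lambda/\hat{\Hecke^\lambda}$ decomposes as the disjoint union $\bigsqcup_{t\in\mathbb{T}(\lambda)}\{m_{st}+\hat{\Hecke^\lambda}\mid s\in\mathbb{T}(\lambda)\}$, so $\Hecke^\lambda/\hat{\Hecke^\lambda}=\bigoplus_{t\in\mathbb{T}(\lambda)}N_t$ as $\Alg$-modules, and each summand is a left $\Hecke$-submodule isomorphic to $S^\lambda$. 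The only real subtlety is bookkeeping: one must check that the filtration on $\Hecke$ inherited from $\Omega^{\mathrm{lex}}$ in Section 2 is compatible with the dominance-order filtration governing Murphy's basis, so that the $\hat{\Hecke^J}$ of Section 2 restricts to Murphy's $\hat{\Hecke^\lambda}$ when $J=J(\lambda)$; once this identification is in hand, the argument above is essentially formal.
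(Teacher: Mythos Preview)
The paper does not actually prove this theorem: it is stated with a citation to Geck and Murphy and no argument is given in the text. So there is no ``paper's own proof'' to compare against; your task reduces to whether your argument is correct on its own terms.

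Your approach is the standard cell-module argument for a cellular algebra and is essentially correct. A couple of small points. First, your numbering is off by one in both places: the Murphy basis theorem with parts (a) and (b) is Theorem~4.5 (item~4.6 is the \emph{definition} of $S^\lambda$), and the cellular multiplication axiom is Theorem~2.4(2) (item~2.5 is the definition of the generic Specht module). Second, the subtlety you flag at the end --- reconciling the $\Omega^{\mathrm{lex}}$ filtration of Section~2 with the dominance filtration governing Murphy's $\hat{\Hecke^\lambda}$ --- is a genuine gap if you really want to derive everything from Section~2, since the map $\lambda\mapsto J(\lambda)$ is not order-preserving from dominance to inclusion (and is not even injective on compositions). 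In practice you do not need this: Theorem~4.5(b), as cited from Murphy, already asserts directly that each $\Hecke^\lambda$ (defined via dominance) is a two-sided ideal, and the type-$A$ cellular axiom you need is exactly what Murphy proves. So you should invoke Theorem~4.5 together with the (Murphy/Mathas) cellular multiplication rule directly, rather than trying to push the general $\Omega^{\mathrm{lex}}$ structure of Section~2 through a comparison of orders. With that adjustment, your argument that $S^\lambda=N_{t^\lambda}$, that each $N_t\cong N_{t^\lambda}$ via the column-independence of the structure constants, and that $\Hecke^\lambda/\hat{\Hecke^\lambda}=\bigoplus_t N_t$, is exactly the intended proof.
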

 While
 \begin{lemma}~\cite{Mathas2}
 Suppose $t\in \mathbb{T}(\lambda)$ such that $i\in I_1(t)$, then for all $s\in \mathbb{T}(\lambda)$
 \[
 T_i m_{st}\equiv q m_{st}+\sum_{v\vartriangleleft s}r_v m_{vt}\qquad\text{ mod $\hat{\Hecke^{\lambda}}$}
 \]
 for some $r_v\in \Alg$.
 \end{lemma}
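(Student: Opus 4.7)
I first note that the hypothesis must concern the left index: since $T_im_{st}=T_iT_{d(s)}C_{w_{J(\lambda)}}T_{d(t)^{-1}}$, the left action of $T_i$ only touches $T_{d(s)}$, so the stated condition $i\in I_1(t)$ is a typographical slip for $i\in I_1(s)$. I prove the formula under this reading.

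My plan has two steps. \textbf{Step 1 (Single-index reduction).} Because $i+1$ sits directly below $i$ in the standard tableau $s$, an inspection of $d(s)$ as the permutation sending $t^\lambda$ to $s$ gives $d(s)^{-1}(i)<d(s)^{-1}(i+1)$, so $s_id(s)>d(s)$; moreover $s_is$ is row-standard, so $s_id(s)\in D_\lambda$. Thus $d(s)\in D_{J(\lambda),s_i}^{+}$, and Lemma~2.1(1) gives
\[
T_iT_{d(s)}C_{w_{J(\lambda)}}=T_{d(u)}C_{w_{J(\lambda)}},
\]
where $u:=s_is$ is the row-standard (but not column-standard) $\lambda$-tableau obtained by interchanging $i$ and $i+1$. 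Multiplying on the right by $T_{d(t)^{-1}}$ yields $T_im_{st}=m_{u,t}$ in $\Hecke$.

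\textbf{Step 2 (Straightening modulo $\hat{\Hecke^\lambda}$).} The tableau $u$ violates column-standardness only at the adjacent cells $(r,c)$ and $(r+1,c)$. I straighten $m_{u,t}$ into the standard Murphy basis using the classical Hecke-algebra Garnir relation (Dipper--James~\cite{James})---equivalently, inserting into $\Hecke C_{w_{J(\lambda)}}$ the coset expansion of the larger symmetrizer $C_{w_{J(\mu)}}$ for the partition $\mu\vartriangleright\lambda$ obtained by merging the offending column cells into a single row-block, and invoking $C_{w_{J(\mu)}}\in\Hecke^\mu\subseteq\hat{\Hecke^\lambda}$ via Theorem~2.4. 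This produces
\[
T_{d(u)}C_{w_{J(\lambda)}}\equiv q\,T_{d(s)}C_{w_{J(\lambda)}}+\sum_{v\vartriangleleft s}r_vT_{d(v)}C_{w_{J(\lambda)}}\pmod{\hat{\Hecke^\lambda}},
\]
with $r_v\in\Alg$. The leading coefficient $q$ comes from the identity coset representative in the Garnir subgroup; the remaining (nontrivial) cosets, once their row-standard images are restandardized, yield indices $v$ strictly below $s$ in dominance, since any non-identity Garnir coset moves weight out of the top $r$ rows of $\lambda$, strictly decreasing the associated partial-sum vector of the shape. Multiplying by $T_{d(t)^{-1}}$ on the right completes the proof.

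The main obstacle lies in Step~2: controlling the leading coefficient $q$ and establishing the dominance-strictness of the secondary indices $v$. Both facts are part of the classical Garnir calculus of Dipper--James--Murphy theory; once they are granted, the proof reduces to the straightforward chain of identities above.
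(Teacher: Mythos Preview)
The paper does not supply its own proof of this lemma; it is quoted from Mathas~\cite{Mathas2} without argument. Your two-step outline---pass to the row-standard but non-column-standard tableau $u=s_is$ and then straighten by a Garnir relation---is exactly the classical Dipper--James--Murphy argument that Mathas gives, so at the level of strategy you match the cited source. Your diagnosis of the $s/t$ swap in the hypothesis is also correct: the corollary immediately following specialises to $m_t=m_{t,t^\lambda}$ with the condition $i\in I_1(t)$ on the \emph{left} index, confirming your reading. Step~1 is clean.

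There is one point in Step~2 where your write-up does not close in this paper's conventions. You justify $v\vartriangleleft s$ by appealing to tableau dominance (``moves weight out of the top $r$ rows \ldots\ strictly decreasing the associated partial-sum vector of the shape''). But this paper explicitly redefines the order on $\mathbb{T}(\lambda)$: here $s\trianglelefteq t$ means $\ell(d(s))\le\ell(d(t))$, and the author flags just before \S4.2 that this differs from Mathas. Your dominance-of-restricted-shapes argument therefore establishes the wrong inequality, or at best an inequality in a different partial order. To finish in the paper's sense you must argue via length: every row-standard tableau arising in the Garnir expansion of $T_{d(u)}C_{w_{J(\lambda)}}$ has strictly shorter $d(\,\cdot\,)$ than $d(u)$, and among the standard ones the unique survivor of length $\ell(d(u))-1=\ell(d(s))$ is $s$ itself, with coefficient~$q$. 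This is how Murphy's original argument runs and is what you should cite. Relatedly, your sentence ``the leading coefficient $q$ comes from the identity coset representative'' is slightly off: the identity coset contributes $m_{u,t}$ itself to the Garnir relation; the coefficient $q$ in front of $m_{s,t}$ emerges only after one isolates the length-$\ell(d(s))$ term among the \emph{non-identity} cosets.
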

 \begin{corollary}Let $t\in \mathbb{T}(\lambda)$ and $s=s_i t$ for some $1\leq i\leq n-1$, then
 \[
 \postdisplaypenalty=10000 \advance\abovedisplayskip 0 pt minus 3
pt \advance\belowdisplayskip 0 pt minus 3 pt  T_i m_t=\begin{cases}
m_s, \text{if $i\in I_0(t)$} \\
m_s+(q-q^{-1})m_t, \text{if $i\in I_0(t')$} \\
-q^{-1}m_t, \text{if $i\in I_1(t')$}\\
q m_t+\sum_{v\vartriangleleft t}r_v m_v  \qquad \text{mod $\hat{\Hecke^{\lambda}}$},\text{if $i\in I_1(t)$}.
 \end{cases}
 \] where $r_v\in \Alg$.
\end{corollary}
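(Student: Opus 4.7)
The plan is to treat this corollary as a direct collation of Lemma 4.4 and Lemma 4.8, using the $\Hecke$-module surjection $\Hecke^\lambda \twoheadrightarrow \Hecke^\lambda/\hat{\Hecke^\lambda}$ under which $T_{d(t)}C_{w_{J(\lambda)}} \mapsto m_t$. By Lemma 4.2 the four sets $I_0(t), I_0(t'), I_1(t), I_1(t')$ partition $\{1, \dots, n-1\}$ for any standard $\lambda$-tableau $t$, so the four cases listed exhaust all possibilities.

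For the first three cases $i \in I_0(t)$, $i \in I_0(t')$, and $i \in I_1(t')$, I would simply invoke Lemma 4.4 applied to the (standard, hence row-standard) tableau $t$, obtaining the identity inside $\Hecke C_{w_{J(\lambda)}}$ directly. In the first two cases the tableau $s = s_it$ is itself standard (a short combinatorial check: if $i+1$ is to the left of and below $i$ in $t$, then swapping them does not disturb the monotonicity of any row or column, and symmetrically for $I_0(t')$), so $T_{d(s)}C_{w_{J(\lambda)}} + \hat{\Hecke^\lambda} = m_s$; in the third case no $T_{d(s)}$ term appears. Passing to the quotient modulo $\hat{\Hecke^\lambda}$ then yields the stated formulas for $T_i m_t$.

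The remaining case $i \in I_1(t)$ is precisely the one \emph{not} covered by Lemma 4.4: here $s = s_i t$ is row-standard (the two affected rows remain increasing) but fails column-standardness, so $s \notin \mathbb{T}(\lambda)$ and no element $m_s$ is defined. For this case I would instead apply Lemma 4.8 to the pair $(t, t^\lambda)$. Since $m_t = m_{t,t^\lambda}$, Lemma 4.8 (whose hypothesis is a condition on the left-hand tableau of $m_{st}$, which the left action of $T_i$ naturally modifies) with $s$ of that lemma taken to be our $t$ gives
\[
T_i m_{t, t^\lambda} \equiv q\, m_{t, t^\lambda} + \sum_{v \vartriangleleft t} r_v\, m_{v, t^\lambda} \pmod{\hat{\Hecke^\lambda}},
\]
which on passing to $S^\lambda$ is exactly the claimed congruence $T_i m_t \equiv q m_t + \sum_{v \vartriangleleft t} r_v m_v$.

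I do not anticipate any substantive obstacle; the result is essentially a case-by-case translation from the $\{T_{d(t)}C_{w_{J(\lambda)}}\}$-basis of $\Hecke C_{w_{J(\lambda)}}$ to the Murphy basis of $S^\lambda$. The only point deserving explicit (but short) verification is the standardness of $s_i t$ in cases (i) and (ii), which is an immediate consequence of the definitions of $I_0(t)$ and $I_0(t')$ together with the standardness of $t$.
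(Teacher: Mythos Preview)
Your approach is correct and is exactly what the paper intends: the corollary is presented without proof because it is the direct combination of Lemma~4.4 (for the first three cases) and Lemma~4.8 (for the case $i\in I_1(t)$), passed to the quotient modulo $\hat{\Hecke^\lambda}$. One small correction: the partition of $\{1,\dots,n-1\}$ into $I_0(t),I_1(t),I_0(t'),I_1(t')$ comes from Lemma~4.1, not Lemma~4.2.
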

We apply with Theorem 4.1 and 4.3 to establish the transition between Murphy's basis and $W\!$-graph basis of the Specht module. We also note that in the references, the authors related the Kazhdan-Lusztig cell module and the corresponding Specht module in the case of symmetry group, group algebra and Hecke algebra of type $A$. See Naruse~\cite{Naruse}, Garsia-MacLarnan~\cite{Garsia} and MacDonough  and Pallicaros ~\cite{Pall} ect.
\begin{thm}For a fixed $\lambda\vdash n$, we define the elements of the $C$-basis for $S^{\lambda}$
\begin{align*}
C_{d(s)w_{J(\lambda)}}&=m_s-q\sum_{d(t)< d(s)}p_{t,s}m_t,\\
&=T_{d(s)}C_{w_{J(\lambda)}}-q\sum_{t\vartriangleleft s}p_{t,s}T_{d(t)}C_{w_{J(\lambda)}}\,\,\,\text{mod($\hat{\Hecke}$)}.
\end{align*}
where $s,t\in \mathbb{T}(\lambda)$ and $p_{t,s}\in \mathbb{Z}(q)$ will be defined recursively by
\begin{equation}
 T_iC_{d(t)w_{J(\lambda)}}=\begin{cases}
 -q^{-1}C_{d(t)w_{J(\lambda)}}, \text{if $i\in I(t')$} \\
 qC_{d(t)w_{J(\lambda)}}+\sum\limits_{i\in I(u'), u\vartriangleleft t}\mu(u,t)C_{d(u)w_{J(\lambda)}}, \text{if $i\in I_1(t)$} \\
 qC_{d(t)w_{J(\lambda)}}+C_{s_id(t)w_{J(\lambda)}}+\sum\limits_{i\in I(u'), u\vartriangleleft t}\mu(u,t)C_{d(u)w_{J(\lambda)}}, \text{if $i\in I_0(t)$}
 \end{cases}
 \end{equation}
 where $\mu(u, t)$ is the constant term of the polynomial $p_{u, t}$.
\end{thm}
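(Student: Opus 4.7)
The plan is to specialize the general machinery of Section~3 to the parabolic subset $J=J(\lambda)\subseteq S$ and translate the abstract multiplication rules of Theorem~\ref{thm:4} into the tableau combinatorics established in Section~4. First I would take the generic Specht module $S^J$ for $J=J(\lambda)$ and transport the basis $\{C_w\mid w\in E_J\}$ coming from Theorem~\ref{thm:2} through the bijection
\[
\mathbb{T}(\lambda)\;\longleftrightarrow\;E_{J(\lambda)},\qquad t\longmapsto d(t)w_{J(\lambda)},
\]
so that I may set $C_{d(s)w_{J(\lambda)}}$ to be the image of the abstract $W$-graph basis element. Under this correspondence the Specht module $S^{\lambda}$ of Definition after Theorem~4.3 coincides with $S^{J(\lambda)}$ (both are generated by the image of $C_{w_{J(\lambda)}}$ modulo $\hat{\Hecke^\lambda}$), and the Murphy basis element $m_t=T_{d(t)}C_{w_{J(\lambda)}}+\hat{\Hecke^\lambda}$ is exactly the image of $\tilde T_{d(t)w_{J(\lambda)}}$.

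Next I would define $p_{t,s}$ by the equation
\[
P_{d(t)w_{J(\lambda)},\,d(s)w_{J(\lambda)}}\;=\;-q\,p_{t,s}
\]
where $P_{y,w}\in\Alg^+$ is the $E_J$-relative Kazhdan-Lusztig polynomial produced by Theorem~\ref{thm:2}. Property (iii) of Theorem~\ref{thm:2} (zero constant term) guarantees that $p_{t,s}$ is indeed a polynomial, and the recipe $\mu(y,w)=$ coefficient of $q$ in $-P_{y,w}$ from Section~3 translates directly into the statement that $\mu(u,t)$ is the constant term of $p_{u,t}$. The expansion $C_w=\sum_y P_{y,w}\,\tilde T_y$ now reads $C_{d(s)w_{J(\lambda)}}=m_s-q\sum_{t\vartriangleleft s}p_{t,s}m_t$, where $t\vartriangleleft s$ is the translation of $y<w$ (Section~4.1) under $t\leftrightarrow d(t)w_{J(\lambda)}$; that $y<w$ (weak Bruhat) corresponds to $\ell(d(t))<\ell(d(s))$, i.e.\ $t\vartriangleleft s$, is immediate from Proposition~1.5(2).

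The multiplication formulas then follow directly from Theorem~\ref{thm:4} combined with the combinatorial identifications
\[
E_{J(\lambda),s_i}^{-}\!\leftrightarrow I_0(t'),\quad E_{J(\lambda),s_i}^{+}\!\leftrightarrow I_0(t),\quad E_{J(\lambda),s_i}^{0,-}\!\leftrightarrow I_1(t'),\quad E_{J(\lambda),s_i}^{0,+}\!\leftrightarrow I_1(t)
\]
observed in Section~4.2. The left descent set $I(w)$ of a vertex $w=d(t)w_{J(\lambda)}$ is the union $E_{J,s_i}^{-}\cup E_{J,s_i}^{0,-}$, so $s_i\in I(w)$ if and only if $i\in I_0(t')\cup I_1(t')=I(t')$, matching the first case of the recursion. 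The remaining two cases correspond to $w\in E_{J,s_i}^{+}$ (giving the extra $C_{s_iw}$ term) and $w\in E_{J,s_i}^{0,+}$ (no such term), which is precisely the dichotomy between $i\in I_0(t)$ and $i\in I_1(t)$.

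The recursion for $p_{t,s}$ itself is then extracted from the proof of Theorem~\ref{thm:2}: expanding both sides of $T_iC_{d(t)w_{J(\lambda)}}$ in the Murphy basis via Corollary to Lemma~4.5 and comparing coefficients yields formulas for $p_{\cdot,\cdot}$ in terms of data with smaller $\ell(d(t))$, exactly as in equations~(10) and~(11) of Section~3. The main obstacle here is bookkeeping rather than conceptual: I must check that the four cases of how $T_i$ acts on $m_t$ (from the Corollary to Lemma~4.5) line up, case by case, with the four cases $y\in E_{J,s}^{\pm},E_{J,s}^{0,\pm}$ appearing in the proof of Theorem~\ref{thm:2}, and that the $i\in I_1(t)$ case modulo $\hat{\Hecke^\lambda}$ produces no spurious contribution outside the triangular sum $\sum_{v\vartriangleleft t}$. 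Once the dictionary is verified, all three assertions of the theorem are immediate specializations of Theorems~\ref{thm:2} and~\ref{thm:4}.
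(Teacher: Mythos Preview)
Your proposal is correct and is precisely the approach the paper takes: the paper itself offers no proof beyond the one-line remark that the result follows by applying Theorems~\ref{thm:2} and~\ref{thm:4} (the general $E_J$-Kazhdan--Lusztig construction and the $W$-graph multiplication rule) to the parabolic $J=J(\lambda)$. Your explicit dictionary between $E_{J(\lambda)}$ and $\mathbb{T}(\lambda)$, and between the cases $E_{J(\lambda),s_i}^{\pm},E_{J(\lambda),s_i}^{0,\pm}$ and the descent data $I_0(t),I_1(t),I_0(t'),I_1(t')$, fills in exactly the translation that the paper leaves to the reader.
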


\end{document}